\numberwithin{equation}{section}
\newtheorem{theorem}{Theorem}[section]
\newtheorem{lemma}[theorem]{Lemma}
\newtheorem{definition}[theorem]{Definition}
\newtheorem{proposition}[theorem]{Proposition}
\newtheorem{corollary}{Corollary}[section]
\theoremstyle{definition}
\newtheorem{remark}[theorem]{Remark}
\newtheorem{notation}[theorem]{Notation}
\title{Parameter estimation in rough Bessel model}
\author{Yuliya Mishura$^{1,2}$ \\ \href{mailto:yuliyamishura@knu.ua}{yuliyamishura@knu.ua}  
    \and Anton Yurchenko-Tytarenko$^3$ \\ \href{mailto:antony@math.uio.no}{antony@math.uio.no}}
\date{%
    $^1$Department of Probability, Statistics and Actuarial Mathematics, Taras Shevchenko National University of Kyiv\\%
    $^2$Division of Mathematics and Physics, M\"alardalen University\\
    $^3$Department of Mathematics, University of Oslo
}
\begin{document}

\maketitle

\begin{abstract}
    In this paper, we construct consistent statistical estimators of the Hurst index, volatility coefficient, and drift parameter for Bessel processes driven by fractional Brownian motion with $H<1/2$. As an auxiliary result, we also prove the continuity of the fractional Bessel process. The results are illustrated with simulations.  
\end{abstract}

\section{Introduction}

The Bessel process, defined as the square root $X = \sqrt{Z}$ of the solution $Z = \{Z(t),~t\ge 1\}$ to the stochastic differential equation (SDE)
\begin{align}\label{eq: standard sq Bessel}
    dZ(t) = kdt + 2\sqrt{Z(t)}dW(t), \quad Z(0) = x_0^2 > 0,
\end{align}
is a well-known probabilistic model used in a wide range of fields including physics \cite{Bray2000, Guarnieri_Moon_Wettlaufer_2017, Horibe_Hosoya_Sakamoto_1983} and finance \cite{CIR1981, CIR1985-1, CIR1985-2, Heston_1993}. If $k\in\mathbb N$, $X$ can be interpreted as the Euclidean norm
\begin{equation}\label{eq: Bessel as norm of Wiener}
    X(t) = \sqrt{B^2_1(t) + \cdots + B^2_k(t)}
\end{equation}
of a $k$-dimensional Brownian motion $(B_1,...,B_k)$ that is connected with $W$ in \eqref{eq: standard sq Bessel} via the relation
\[
    W(t) = \sum_{i=1}^k \int_0^t \frac{B_i(s)}{X(s)}dB_i(s),
\]
and therefore the parameter $k$ is often referred to as the \emph{dimension} of the Bessel process $X$. Moreover, as it is shown in \cite[Section 3]{Cherny_2000} (see also \cite{MYT2022}), if $k>1$, the Bessel process $X$ is the unique nonnegative strong solution to the SDE
\begin{equation}\label{eq: Bessel eq}
    dX(t) = \frac{k-1}{X(t)}dt + dW(t), \quad X(0) = x_0>0.
\end{equation}
For more details on various properties of Bessel processes, we refer the reader to \cite[Chapter XI]{Revuz_Yor_1999}, the book \cite{Cherny_Engelbert_2005} which considers general equations of the type \eqref{eq: Bessel eq} or \cite{Bertoin_1990, Bertoin_1990b, Mishura_Pilipenko_Yurchenko-Tytarenko_2023} which deal with the case $0<k<1$.

However, in many applications, standard Brownian motion may not adequately capture the desired level of complexity observed in real-life phenomena. For example, a number of empirical studies \cite{Bollerslev_Mikkelsen_1996, Breidt_Crato_Lima_1998, Cont_2005, Ding_Granger_1996, Ding_Granger_Engle_1993} point out the presence of memory in financial markets. Other sources \cite{Alos_Leon_Vives_2007, Fukasawa_2021, Fukasawa_Takabatake_Westphal_2019, GatheralJaissonRosenbaum2018} indicate that models exhibiting very low H\"older regularity are better suited to reflect the behavior of market volatility. Given that Bessel-type processes are frequently employed in stochastic volatility modeling (see e.g. \cite[Chapter 6]{Jeanblanc_Yor_Chesney_2009}), there is a natural inclination to enhance them by incorporating the aforementioned memory or roughness. A common way to achieve such an effect is to replace (in some sense) the standard Brownian driver $W$ with a fractional Brownian motion $B^H = \{B^H(t),~t\ge 0\}$, i.e. a centered Gaussian process with covariance function
\[
    \mathbb E\left[ B^H(t)B^H(s)\right] = \frac{1}{2}\left(t^{2H} + s^{2H} - |t-s|^{2H}\right), \quad s,t\ge 0.
\] 
For instance, \cite{Essaky_Nualart_2015, Guerra_Nualart_2005, Hu_Nualart_2005} used the property \eqref{eq: Bessel as norm of Wiener} as the starting point of their modification and defined fractional Bessel process $\rho^H$ for $k\in\mathbb N$ as
\[
    \rho^H(t) = \sqrt{(B^H_1(t))^2 + \cdots + (B^H_k(t))^2}
\]
with $B^H_1$,..., $B^H_k$ being $k$ independent fractional Brownian motions. In particular, they prove that $\rho^H$ admits a representation
\begin{equation}\label{eq: Nualart Bessel}
    d\rho^H(t) = H(k-1) \frac{t^{2H-1}}{\rho^H(t)}dt + \sum_{i=1}^k \frac{B^H_i(t)}{\rho^H(t)} dB^H_i(t),
\end{equation}
where the integrals w.r.t. fractional Brownian motions are understood in the divergence sense. 

Another possible ``\textit{fractionalization}'' of the Bessel process may be achieved by replacing $W$ with $B^H$ directly in \eqref{eq: Bessel eq}. This approach was discussed in detail in the series of papers \cite{DNMYT2020, Mishura_Ralchenko_2023, MYT_2018, Mishura_Yurchenko-Tytarenko_2018}: according to it, the fractional Bessel process $X^H = \{X^H(t),~t\ge0\}$ is defined as the a.s. pointwise limit 
\begin{equation}\label{eq: our Bessel process}
    X^H(t) := \lim_{\varepsilon\downarrow 0} X_\varepsilon^H (t)
\end{equation}
of stochastic processes given by the SDE
\[
    X^H_\varepsilon(t) = x_0 +\int_0^t \frac{a}{X^H_\varepsilon(s) \mathbbm 1_{\{X^H_\varepsilon(s)>0\}} + \varepsilon}ds + \sigma B^H(t).
\]

In the present paper, we consider the latter notion of the fractional Bessel process and perform a statistical estimation of parameters $H$, $\sigma$ and $a$ when $H\in\left(0,\frac{1}{2}\right)$. 
\begin{itemize}
    \item In order to estimate $H$ and $\sigma$, we use the standard technique based on quadratic variations of fractional Brownian motion. The main challenge arises in the limit $L^H(t) := \lim_{\varepsilon \downarrow 0}\int_0^t \frac{a}{X^H_\varepsilon(s) \mathbbm 1_{\{X^H_\varepsilon(s)>0\}} + \varepsilon} ds$: when $H<\frac{1}{2}$, \cite[Corollary 3.3]{Mishura_Yurchenko-Tytarenko_2018} only guarantees its continuity in $t$ almost everywhere w.r.t. the Lebesgue measure. It is not enough for the analysis of quadratic variations of $L^H$, so we prove that $L^H$ is continuous at every $t\ge 0$ and utilize this fact to obtain consistent estimators of $H$ and $\sigma$.

    \item Our estimator of $a$ is, in turn, based on the unconventional technique tailored for our specific model. The problem here is that it is currently not known whether the fractional Bessel process exhibits any ergodic properties  typically utilized for drift parameter estimation. In our analysis, we exploit the explicit dynamics of $X^H$ instead to compare the behavior of $X^H(T)$ and $\int_0^T \frac{1}{X^H(t)}dt$ when $T\to\infty$.  
\end{itemize}

The paper is structured as follows. In Section \ref{sec: theory}, we provide the definition of the fractional Bessel process and prove the continuity of the latter. Section \ref{sec: Hurst and volatility} is devoted to the estimation of $H$ and $\sigma$. In Section \ref{sec: drift}, we present our estimator of the drift and prove its consistency. Section \ref{sec: simulations} contains simulations. In Appendix \ref{app: properties of X}, we prove a technical result related to the finiteness of the limit \eqref{eq: our Bessel process}.

\section{Rough Bessel processes}\label{sec: theory}

Let $B^H = \{B^H(t),~t\ge 0\}$ be a fractional Brownian motion with Hurst index $H\in\left(0, \frac 1 2\right)$. 

\begin{remark}
    Using the Kolmogorov-Chentsov theorem, it is possible to prove that $B^H$ has a modification with paths that are locally H\"older continuous up to the order $H$, i.e. for any $T>0$ and $\lambda\in(0,H)$ there exists a positive random variable $\Lambda = \Lambda_{T,\lambda}$ such that
    \begin{equation}\label{eq: BH Holder continuity}
        |B^H(t_1) - B^H(t_2)| \le \Lambda |t_1-t_2|^{\lambda}, \quad t_1,t_2\in[0,T].
    \end{equation}
    Moreover, by \cite{ASVY2014}, the random variable $\Lambda$ can be chosen in such a way that for any $p>0$
    \[
        \mathbb E [\Lambda^p] < \infty.
    \]
    In what follows, we always consider this modification of $B^H$.
\end{remark}
For any $\varepsilon>0$, $x_0>0$, $a>0$, $b\ge 0$ and $\sigma>0$, consider a random process $X^H_\varepsilon = \{X^H_\varepsilon(t),~t\ge 0\}$ given by a stochastic differential equation
\begin{align}\label{eq: pre-limit process}
    X^H_\varepsilon(t) = x_0 +\int_0^t \frac{a}{X^H_\varepsilon(s) \mathbbm 1_{\{X^H_\varepsilon(s)>0\}} + \varepsilon}ds + \sigma B^H(t).
\end{align}
Note that the SDE \eqref{eq: pre-limit process} has Lipschitz continuous drift and additive noise and hence, by the standard Picard iteration argument applied pathwisely, \eqref{eq: pre-limit process} has a unique solution for each $\varepsilon>0$. Moreover, by \cite[Lemma 2.1]{Mishura_Yurchenko-Tytarenko_2018} (see also \cite[Lemma 1.6]{DNMYT2020}), for any $\varepsilon_1 < \varepsilon_2$,
\begin{equation}\label{eq: comparison}
    X^H_{\varepsilon_1}(t) \ge X^H_{\varepsilon_2}(t), \quad t\in[0,T], \quad \omega\in\Omega,
\end{equation}
and hence, for any $t\ge 0$, one can define the limit
\begin{equation}\label{eq: limit process}
    X^H(t) := \lim_{\varepsilon \downarrow 0} X^H_{\varepsilon}(t).
\end{equation}

\begin{definition}
    The process $X^H$ defined by \eqref{eq: limit process} will be called a \emph{fractional} or \emph{rough Bessel process}.
\end{definition}

\begin{remark}\label{rem: properties of RBP}
    This construction of rough Bessel processes was introduced and extensively studied in \cite{Mishura_Yurchenko-Tytarenko_2018}. In particular, it was shown that
    \begin{itemize}
        \item[1)] the limit \eqref{eq: limit process} is finite, i.e., with probability 1,
        \begin{equation}\label{eq: property to reprove}
            X^H(t) < \infty
        \end{equation}
        for all $t\ge 0$;
        \item[2)] with probability 1, $X^H(t) \ge 0$ for all $t\ge 0$ and, moreover, $X^H(t) > 0$ for almost all $t\ge 0$.
    \end{itemize}
    It should be noted that one of the possible cases is missing in Step 2 of the proof of \cite[Theorem 2.1]{Mishura_Yurchenko-Tytarenko_2018} concerning \eqref{eq: property to reprove}. For the reader's convenience, we provide the completed version of it in Appendix \ref{app: properties of X}.
\end{remark}

Since the limit \eqref{eq: limit process} is well-defined and finite, the limit
\begin{equation}
\begin{aligned}
    \lim_{\varepsilon\downarrow 0} \int_0^t \frac{a}{X^H_\varepsilon(s) \mathbbm 1_{\{X^H_\varepsilon(s)>0\}} + \varepsilon}ds &= \lim_{\varepsilon\downarrow 0} \left(  X^H_\varepsilon(t) - x_0 - \sigma B^H(t)\right) 
    \\
    &= X^H(t) - x_0 - \sigma B^H(t)
\end{aligned}
\end{equation}
also exists and is finite. In what follows, we will use the notation
\begin{equation}\label{eq: L}
    L^H(t) := \lim_{\varepsilon\downarrow 0} \int_0^t \frac{1}{X^H_\varepsilon(s) \mathbbm 1_{\{X^H_\varepsilon(s)>0\}} + \varepsilon}ds,
\end{equation}
i.e. $X^H$ satisfies the equation
\begin{equation}\label{eq: equation for X with L}
    X^H(t) = x_0 + a L^H(t) + \sigma B^H(t), \quad t\ge 0.
\end{equation}

By Fatou's lemma, for any $t\ge 0$,
\begin{equation*}
    \int_0^t \frac{1}{X^H(s)}ds \le \liminf_{\varepsilon\downarrow 0} \int_0^t \frac{1}{X^H_\varepsilon(s) \mathbbm 1_{\{X^H_\varepsilon(s)>0\}} + \varepsilon}ds = L^H(t) < \infty.
\end{equation*}
Denote
\[
    R^H(t) := L^H(t) - \int_0^t \frac{1}{X^H(s)}ds
\]
and re-write \eqref{eq: equation for X with L} as
\begin{equation}\label{eq: equation for X}
    X^H(t) = x_0 + a\int_0^t \frac{1}{X^H(s)}ds + \sigma B^H(t) + a R^H(t).
\end{equation}
At the moment, it is not clear whether $R^H(t) = 0$ for all $t\ge 0$. However, we have the following result.

\begin{proposition}
    \begin{itemize}
        \item[1)] With probability 1, there exists $\tau > 0$ such that for all $t\ge \tau$
        \[
            X^H(t) > 0.
        \]
    
        \item[2)] Let $\tau >0$ be such that $X^H(\tau) > 0$. Then there exists a neighbourhood $(\tau_1, \tau_2) \ni \tau$ such that for all $t_1,t_2 \in (\tau_1, \tau_2)$,  $R^H(t_1) - R^H(t_2) = 0$, i.e.
        \begin{equation}\label{eq: L in regular points}
            L^H(t_2) - L^H(t_1) = \int_{t_1}^{t_2} \frac{1}{X^H(s)}ds.
        \end{equation}
    \end{itemize}
\end{proposition}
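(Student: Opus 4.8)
The plan rests on two structural facts that I would record at the outset. First, since each integrand $\bigl(X^H_\varepsilon(s)\mathbbm 1_{\{X^H_\varepsilon(s)>0\}}+\varepsilon\bigr)^{-1}$ is positive, every pre-limit integral is non-decreasing in $t$, so its pointwise limit $L^H$ is non-decreasing; together with the Fatou inequality $\int_0^t (X^H(s))^{-1}\,ds\le L^H(t)$ recorded just before the statement, this also makes $R^H$ non-decreasing and non-negative. Second, by the comparison \eqref{eq: comparison} the family $X^H_\varepsilon$ increases to $X^H$ as $\varepsilon\downarrow 0$, so $X^H$ is a pointwise supremum of continuous functions and hence lower semicontinuous; in particular $\{X^H>0\}$ is open. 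The two parts are then driven separately by these two facts (note that the continuity of $X^H$ is not yet available at this stage, so only semicontinuity may be used).

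For part 1 I would quantify the growth of $L^H$. Writing $M(t):=\sup_{0\le u\le t}|B^H(u)|$, the crude bound $X^H(s)\le x_0+aL^H(s)+\sigma M(s)\le x_0+aL^H(t)+\sigma M(t)$ for $s\le t$ (using \eqref{eq: equation for X with L} and monotonicity of $L^H$ and $M$), combined with the Fatou inequality above, yields the self-improving estimate
\begin{equation*}
    L^H(t)\bigl(x_0+aL^H(t)+\sigma M(t)\bigr)\ge t .
\end{equation*}
Solving this quadratic inequality for $L^H(t)$ gives
\begin{equation*}
    aL^H(t)-\sigma M(t)\ge \tfrac12\Bigl(\sqrt{(x_0+\sigma M(t))^2+4at}-x_0-3\sigma M(t)\Bigr).
\end{equation*}
Because $H<\tfrac12$, the growth properties of fractional Brownian motion (e.g. the law of the iterated logarithm) give $M(t)=o(\sqrt t)$ almost surely, whence $(x_0+\sigma M(t))^2=o(t)$ and the right-hand side tends to $+\infty$. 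Consequently $X^H(t)\ge x_0+aL^H(t)-\sigma M(t)\to+\infty$, which furnishes the required $\tau$. The decisive point — and where $H<\tfrac12$ is indispensable — is precisely that the self-bounding inequality forces $L^H(t)\gtrsim\sqrt t$, a rate strictly dominating the $t^{H}$-order fluctuations of $B^H$.

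For part 2 I would localise around a point $\tau$ with $X^H(\tau)>0$. Since $\{X^H>0\}$ is open, I may pick $\tau_1<\tau<\tau_2$ with $[\tau_1,\tau_2]\subset\{X^H>0\}$, and lower semicontinuity on the compact $[\tau_1,\tau_2]$ then gives $c:=\min_{[\tau_1,\tau_2]}X^H>0$. The key analytic step is a one-sided Dini argument: as the $X^H_\varepsilon$ are continuous and increase to $X^H$, a compactness/subsequence argument upgrades the pointwise monotone convergence to convergence of the attained minima $m_\varepsilon:=\min_{[\tau_1,\tau_2]}X^H_\varepsilon\uparrow c$. Hence for all small $\varepsilon$ one has $X^H_\varepsilon(s)\ge c/2>0$ on $[\tau_1,\tau_2]$, so the indicator equals $1$ and the integrands are bounded there by $2/c$. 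Dominated convergence lets me pass to the limit,
\begin{equation*}
    L^H(t_2)-L^H(t_1)=\lim_{\varepsilon\downarrow 0}\int_{t_1}^{t_2}\frac{ds}{X^H_\varepsilon(s)+\varepsilon}=\int_{t_1}^{t_2}\frac{ds}{X^H(s)},
\end{equation*}
for every $t_1,t_2\in(\tau_1,\tau_2)$, which is exactly $R^H(t_1)-R^H(t_2)=0$ and proves \eqref{eq: L in regular points}.

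I expect the one-sided Dini step in part 2 to be the main obstacle: because $X^H$ is only lower semicontinuous here (its continuity being proved later), the uniform lower bound on $X^H_\varepsilon$ cannot be read off from the classical Dini theorem and must instead be extracted from the monotone convergence $X^H_\varepsilon\uparrow X^H$ together with compactness of $[\tau_1,\tau_2]$ and continuity of each $X^H_\varepsilon$. In part 1 the analogous difficulty is the derivation of the $\sqrt t$ lower growth of $L^H$, which is where the hypothesis $H<\tfrac12$ is genuinely used.
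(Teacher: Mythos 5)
Your proposal is correct in both parts, but it takes a genuinely different route from the paper. For part 1), the paper gives no argument at all: it simply invokes an external result (Theorem 3.3 of Mishura--Ralchenko, 2023). Your self-bounding inequality $L^H(t)\bigl(x_0+aL^H(t)+\sigma M(t)\bigr)\ge t$, obtained from the Fatou bound $\int_0^t (X^H(s))^{-1}ds\le L^H(t)$ together with $0\le X^H(s)\le x_0+aL^H(t)+\sigma M(t)$, is a legitimate self-contained alternative; solving the quadratic and using $M(t)=o(\sqrt t)$ (which follows from the paper's own Theorem \ref{th: growth of fBm} with $\delta<\tfrac12-H$, so you need not even appeal to the LIL) it actually proves the stronger statement $X^H(t)\to\infty$ a.s. For part 2), your skeleton (uniform positive lower bound on $X^H_\varepsilon$ near $\tau$, then dominated convergence) is the same as the paper's, but you obtain the uniform bound the hard way. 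The step you flag as the main obstacle — the ``one-sided Dini'' convergence of minima $m_\varepsilon\uparrow c$ via compactness and a subsequence argument — is indeed correct (for $\varepsilon_{n_k}\le\varepsilon_j$ one has $m_{\varepsilon_{n_k}}\ge X^H_{\varepsilon_j}(x_{n_k})\to X^H_{\varepsilon_j}(x^*)$ along a convergent subsequence of argmins, then let $j\to\infty$), but it is unnecessary: the comparison \eqref{eq: comparison} short-circuits it. Since $X^H_\varepsilon(\tau)\uparrow X^H(\tau)>0$, fix one $\varepsilon_0$ with $X^H_{\varepsilon_0}(\tau)>0$; continuity of that single pre-limit process gives a neighbourhood $(\tau_1,\tau_2)$ and $x>0$ with $X^H_{\varepsilon_0}>x$ there, and then $X^H_\varepsilon\ge X^H_{\varepsilon_0}>x$ on $(\tau_1,\tau_2)$ for every $\varepsilon\le\varepsilon_0$, which is exactly the domination needed. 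This is the paper's argument, and it also renders your lower-semicontinuity preliminaries (openness of $\{X^H>0\}$, attainment of the minimum) superfluous. In short: your part 1 buys self-containedness and a stronger conclusion at the cost of more computation; your part 2 buys nothing over the paper's one-line trick, though it is sound.
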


\begin{proof}
    Item 1) follows directly from \cite[Theorem 3.3]{Mishura_Ralchenko_2023} so let us prove the claim 2). Let $\tau>0$ be such that $X^H(\tau) > 0$. Since $X^H_\varepsilon(\tau) \uparrow X^H(\tau)$ as $\varepsilon \downarrow 0$, there exists $\varepsilon_0 >0$ such that for all $\varepsilon \le \varepsilon_0$, $X^H_\varepsilon(\tau) > 0$. Moreover, the process $X^H_{\varepsilon_0}$ is continuous w.r.t. $t$ hence there exists a neighbourhood $(\tau_1, \tau_2) \ni \tau$ and some positive value $x>0$ such that for all $t\in (\tau_1, \tau_2)$ 
    \[
        X^H_{\varepsilon_0}(t) > x.
    \]
    In particular, \eqref{eq: comparison} implies that for all $\varepsilon \le \varepsilon_0$
    \[
        X^H_{\varepsilon}(t) > x, \quad t\in (\tau_1, \tau_2). 
    \]
    Therefore, for any $\varepsilon \le \varepsilon_0$ and $t \in  (\tau_1, \tau_2)$,
    \begin{align*}
        \frac{1}{X^H_\varepsilon(t) \mathbbm 1_{\{X^H_\varepsilon(t)>0\}} + \varepsilon} & < \frac{1}{x} 
    \end{align*}
    and hence, by the dominated convergence theorem, for any $t_1,t_2 \in(\tau_1,\tau_2)$, $t_1<t_2$,
    \begin{align*}
        L^H(t_2) - L^H(t_1) &= \lim_{\varepsilon\downarrow 0} \int_{t_1}^{t_2} \frac{1}{X^H_\varepsilon(s) \mathbbm 1_{\{X^H_\varepsilon(s)>0\}} + \varepsilon}ds
        \\
        &= \int_{t_1}^{t_2} \frac{1}{X^H(s)}ds.
    \end{align*}
\end{proof}

\begin{corollary}\label{cor: X is Bessel process}
    With probability 1, there exists $\tau > 0$ such that for all $t_2 > t_1 \ge \tau$
    \[
        R^H(t_1) = R^H(t_2)
    \] 
    and
    \[
        L^H(t_2) - L^H(t_1) = \int_{t_1}^{t_2} \frac{1}{X^H(s)}ds.
    \]       
\end{corollary}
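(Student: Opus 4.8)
The plan is to promote the \emph{local} constancy of $R^H$ established in item 2) of the Proposition to \emph{global} constancy on the ray $[\tau,\infty)$, where $\tau$ is the (random) time furnished by item 1). Since a function that is locally constant on a connected set is automatically constant, this reduces the entire corollary to a topological connectedness argument on intervals, after which the assertion about $L^H$ follows at once from the definition of $R^H$.

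First I would fix, on the probability-one event from item 1), a value $\tau>0$ such that $X^H(t)>0$ for every $t\ge\tau$. Fix any $t_2>t_1\ge\tau$; the goal is to show $R^H(t_1)=R^H(t_2)$. For each $s\in[t_1,t_2]$ we have $s\ge t_1\ge\tau$, hence $X^H(s)>0$, so item 2) of the Proposition supplies an open interval $(\alpha_s,\beta_s)\ni s$ on which $R^H$ is constant. Thus $R^H$ is locally constant on $[t_1,t_2]$. To convert this into genuine constancy, I would run the standard clopen argument: set $A:=\{s\in[t_1,t_2]:\ R^H(s)=R^H(t_1)\}$. This set is nonempty since $t_1\in A$; it is open in $[t_1,t_2]$ because local constancy around any $s\in A$ pushes a whole neighborhood of $s$ into $A$; and its complement is open for exactly the same reason, so $A$ is also closed. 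As $[t_1,t_2]$ is connected, $A=[t_1,t_2]$, whence $t_2\in A$ and $R^H(t_1)=R^H(t_2)$. (Equivalently, one may cover the compact interval $[t_1,t_2]$ by finitely many of the intervals $(\alpha_s,\beta_s)$ and chain the common constant values across their overlaps.)

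Finally, recalling that $R^H(t)=L^H(t)-\int_0^t \frac{1}{X^H(s)}\,ds$, the identity $R^H(t_1)=R^H(t_2)$ rearranges directly into $L^H(t_2)-L^H(t_1)=\int_{t_1}^{t_2}\frac{1}{X^H(s)}\,ds$, which is the second claim. The only content beyond item 2) of the Proposition is the local-to-global passage, so the main (and essentially only) obstacle is the connectedness bookkeeping. I would emphasize that this step is carried out pointwise in $\omega$ on the deterministic interval $[t_1,t_2]$, so the argument is purely topological and requires no uniformity in $s$ of the neighborhood sizes $\beta_s-\alpha_s$; this is what keeps the otherwise delicate absence of regularity information on $L^H$ from entering the proof.
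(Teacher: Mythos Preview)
Your proposal is correct and is exactly the argument the paper has in mind: the corollary is stated without proof because it follows immediately from combining items 1) and 2) of the preceding Proposition via the local-to-global (connectedness) step you describe. Your write-up simply makes explicit the routine topological bookkeeping that the paper leaves to the reader.
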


Note that \cite[Corollary 3.3]{Mishura_Yurchenko-Tytarenko_2018} establishes only continuity of $L^H$ \textit{almost everywhere} on $\mathbb R_+$ w.r.t. the Lebesgue measure. It is not enough for our purposes: in order to estimate $H$ and $\sigma$, we want to utilize the behavior of quadratic variations of $X^H$ and any possible discontinuities of $L^H$ would create substantial obstacles for our analysis. It turns out, however, that $L^H$ (and hence $X^H$) is continuous at \textit{all} points $t\ge 0$. The corresponding theorem is provided below.

\begin{theorem}\label{th: main property result} \hfill
    \begin{enumerate}
        \item The process $L^H$ defined by \eqref{eq: L} is non-decreasing.
        
        \item The processes $X^H = \{X^H(t),~t\ge 0\}$ and $L^H = \{L^H(t),~t\ge 0\}$ have continuous paths.
    \end{enumerate}
\end{theorem}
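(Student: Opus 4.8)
The plan is to reduce both assertions to $L^H$: since $X^H(t) = x_0 + aL^H(t) + \sigma B^H(t)$ and $B^H$ has continuous paths, $X^H$ is continuous as soon as $L^H$ is, so it suffices to prove non-decrease and continuity for $L^H$. For the first claim I would note that each approximating integral $L_\varepsilon(t) := \int_0^t \frac{ds}{X^H_\varepsilon(s)\mathbbm 1_{\{X^H_\varepsilon(s)>0\}}+\varepsilon}$ is non-decreasing in $t$ (the integrand is positive) and pass to the pointwise limit, which preserves monotonicity. In fact the comparison \eqref{eq: comparison} gives $X^H_\varepsilon \uparrow X^H$, hence $L_\varepsilon = (X^H_\varepsilon - x_0 - \sigma B^H)/a \uparrow L^H$, so $L^H$ is the increasing limit of the \emph{continuous} non-decreasing functions $L_\varepsilon$ — a structural fact I will exploit below.

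For the second claim, since $L^H$ is non-decreasing its only possible discontinuities are jumps, and I would rule out the two one-sided jumps separately. Being an increasing limit of continuous functions, $L^H$ is lower semicontinuous (equivalently $X^H=\sup_n X^H_{1/n}$ is l.s.c.), and a lower semicontinuous non-decreasing function is automatically left-continuous; this eliminates all left jumps. It remains to exclude right jumps. Here I first invoke the previous Proposition (item~2), which shows $L^H$ is continuous at every point where $X^H>0$. Consequently, if $L^H$ had a right jump at some $t_0$, then necessarily $X^H(t_0)=0$, while the right limit $c := X^H(t_0+) = x_0 + aL^H(t_0+) + \sigma B^H(t_0)$ exists (monotonicity of $L^H$, continuity of $B^H$) and is strictly positive.

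The crux — and the step I expect to be the main obstacle — is to contradict such a right jump using the explicit dynamics of the approximations, via a \emph{climb-time} estimate asserting that the drift cannot lift $X^H_\varepsilon$ by a macroscopic amount in arbitrarily short time. Concretely, fix a small $\delta>0$ and pick $s_\delta\in(t_0,t_0+\delta)$ with $X^H(s_\delta)>c/2$; since $X^H_\varepsilon(t_0)\uparrow X^H(t_0)=0$ and $X^H_\varepsilon(s_\delta)\uparrow X^H(s_\delta)$, for all small $\varepsilon$ we have $X^H_\varepsilon(t_0)\le 0$ and $X^H_\varepsilon(s_\delta)>c/2$. I would then set $\alpha_\varepsilon := \sup\{u<s_\delta : X^H_\varepsilon(u)\le c/4\}$; by continuity of $X^H_\varepsilon$ this is well defined with $X^H_\varepsilon(\alpha_\varepsilon)=c/4$ and $X^H_\varepsilon>c/4$ on $(\alpha_\varepsilon,s_\delta]$. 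On that last interval the drift density is bounded by $4a/c$, so the drift increment is at most $\frac{4a}{c}(s_\delta-\alpha_\varepsilon)\le \frac{4a}{c}\delta$, while the noise increment is at most $\sigma\Lambda\delta^{\lambda}$ by \eqref{eq: BH Holder continuity}. Inserting these into
\[
    X^H_\varepsilon(s_\delta)-X^H_\varepsilon(\alpha_\varepsilon)=\int_{\alpha_\varepsilon}^{s_\delta}\frac{a\,du}{X^H_\varepsilon(u)+\varepsilon}+\sigma\bigl(B^H(s_\delta)-B^H(\alpha_\varepsilon)\bigr)
\]
and using $X^H_\varepsilon(s_\delta)-X^H_\varepsilon(\alpha_\varepsilon)>c/2-c/4=c/4$ yields $\tfrac c4 < \tfrac{4a}{c}\delta + \sigma\Lambda\delta^{\lambda}$, which is impossible once $\delta$ is fixed small enough (both terms on the right vanish as $\delta\downarrow 0$, and $\Lambda<\infty$ a.s.). This contradiction excludes right jumps, so $L^H$ and hence $X^H$ is continuous. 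The delicate point throughout is that the $1/x$-drift blows up near the zero set of $X^H$, so the estimate must be localized to levels bounded away from $0$ (here, above $c/4$) where the drift is controlled, with the Hölder regularity of $B^H$ absorbing the residual fluctuation.
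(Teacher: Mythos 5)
Your proposal is correct and follows essentially the same route as the paper: both proofs reduce the problem to ruling out positive jumps of the monotone process $L^H$, use the preceding Proposition to force any discontinuity point to be a zero of $X^H$, and then derive a contradiction from the same climb-time estimate, in which the drift of $X^H_\varepsilon$ above a level comparable to the jump size $c$ is bounded by $4a/c$ while the noise increment is controlled by the H\"older bound \eqref{eq: BH Holder continuity}, so that a macroscopic rise cannot occur in an arbitrarily short time. The only differences are minor: for left-continuity you invoke lower semicontinuity of $L^H$ as a supremum of continuous approximations, whereas the paper argues via non-negativity of $X^H$ together with positivity of jumps, and you use a single last-crossing level $c/4$ where the paper uses two levels $\alpha/4$ and $3\alpha/4$ --- both variants are sound.
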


\begin{proof}
    First of all, observe that for any $0\le t_1< t_2\le T$
    \begin{align*}
        L^H(t_1) &= \lim_{\varepsilon\downarrow 0} \int_0^{t_1} \frac{1}{X^H_\varepsilon(s) \mathbbm 1_{\{X^H_\varepsilon(s)>0\}} + \varepsilon}ds 
        \\
        & \le \lim_{\varepsilon\downarrow 0} \int_0^{t_2} \frac{1}{X^H_\varepsilon(s) \mathbbm 1_{\{X^H_\varepsilon(s)>0\}} + \varepsilon}ds 
        \\
        & = L^H(t_2),
    \end{align*}
    i.e. the process $L^H$ is indeed non-decreasing. Next, fix an arbitrary deterministic $T>0$, choose an arbitrary $\lambda \in (0,H)$ and let $\Lambda_T$ be such that for all $s,t\in[0,T]$
    \begin{equation}\label{eq: Holder continuity of fBm}
        |B^H(t)-B^H(s)| \le \Lambda|t-s|^\lambda.
    \end{equation}
    
    Note that the monotonicity of $L^H$ implies that its discontinuities can only take the form of positive jumps and, moreover, left and right limits 
    \[
        L^H(t-) := \lim_{\delta \downarrow 0} L^H(t-\delta), \quad L^H(t+) := \lim_{\delta \downarrow 0} L^H(t+\delta)
    \]
    are well-defined at any point $t\in(0,T)$. Next, observe that the limit process $X^H$ defined by \eqref{eq: limit process} satisfies the equation
    \[
        X^H(t) = x_0 + aL^H(t) - b \int_0^t X^H(s)ds + \sigma B^H(t), \quad t\in[0,T],
    \]
    and hence points of discontinuity of $X^H$ coincide with the ones of $L^H$, can occur only in the form of positive jumps of the same size as the corresponding jumps of $L^H$ and, finally, one can define limits
    \[
        X^H(t-) := \lim_{\delta \downarrow 0} X^H(t-\delta), \quad X^H(t+) := \lim_{\delta \downarrow 0} X^H(t+\delta).
    \]
    Assume that $\tau\in[0,T]$ is a point of discontinuity of $L^H$ (and hence $X^H$) for some $\omega\in\Omega$ and observe that $X^H(\tau) = 0$ since otherwise $\tau$ cannot be a point of discontinuity of $X^H$ and $L^H$ by \eqref{eq: L in regular points}. Let $\alpha >0$ be such that
    \[
        X^H(\tau+) = L^H(\tau+) = \alpha
    \]
    and $\delta > 0$ be such that for all $t\in(\tau,\tau+\delta)$ 
    \[
        X^H(t) > \frac{9\alpha}{10}.
    \]
    Next, choose $\delta_0 < \delta$ such that
    \[
        \frac{4a}{\alpha} \delta_0 + \sigma\Lambda_T \delta_0^\lambda \le \frac{\alpha}{4},
    \]
    where $\lambda\in(0,H)$ and $\Lambda_T > 0$ are from \eqref{eq: Holder continuity of fBm}. Since $X^H_\varepsilon(\tau+\delta_0) \uparrow X^H(\tau+\delta_0)$ as $\varepsilon \downarrow 0$, there exists $\varepsilon_0 > 0$ such that for all $\varepsilon \in(0, \varepsilon_0]$
    \[
        X^H_\varepsilon(\tau+\delta_0) > \frac{4\alpha}{5}, \quad X^H_\varepsilon(\tau) \le 0
    \]
    Furthermore, $X^H_\varepsilon(\tau) \uparrow X^H(\tau) = 0$ as $\varepsilon \downarrow 0$ hence $X^H_\varepsilon(\tau) \le 0$ for all $\varepsilon > 0$ and therefore for any $\varepsilon \in(0, \varepsilon_0]$ one can define
    \[
        \tau_\varepsilon^- := \sup\left\{s\in(\tau, \tau+\delta_0):~X^H_\varepsilon(s) = \frac{\alpha}{4}\right\}
    \]
    and
    \[
        \tau_\varepsilon^+ := \sup\left\{s\in(\tau_\varepsilon^-, \tau+\delta_0):~X^H_\varepsilon(s) = \frac{3\alpha}{4}\right\}.
    \]
    By continuity, it is evident that $X_\varepsilon^H(\tau_\varepsilon^-) = \frac{\alpha}{4}$, $X_\varepsilon^H(\tau_\varepsilon^+) = \frac{3\alpha}{4}$ and $X_\varepsilon^H(t) \ge \frac{\alpha}{4}$ for any $t\in[\tau_\varepsilon^-, \tau_\varepsilon^+]$. Therefore for any $\varepsilon \in(0, \varepsilon_0]$
    \begin{align*}
        \frac{\alpha}{2} &= X_\varepsilon^H(\tau_\varepsilon^+) - X_\varepsilon^H(\tau_\varepsilon^-) 
        \\
        &= \int_{\tau_\varepsilon^-}^{\tau_\varepsilon^+} \frac{a}{X^H_\varepsilon(s) \mathbbm 1_{\{X^H_\varepsilon(s)>0\}} + \varepsilon}ds - b \int_{\tau_\varepsilon^-}^{\tau_\varepsilon^+} X_\varepsilon^H(s)ds + \sigma\left(B^H(\tau_\varepsilon^+) - B^H(\tau_\varepsilon^-)\right)
        \\
        &\le \frac{4a}{\alpha} (\tau_\varepsilon^+ - \tau_\varepsilon^-) + \sigma\Lambda_T(\tau_\varepsilon^+ - \tau_\varepsilon^-)^\lambda 
        \\
        &\le \frac{4a}{\alpha} \delta_0 + \sigma\Lambda_T \delta_0^\lambda
        \\
        &<\frac{\alpha}{4},
    \end{align*}
    which gives a contradiction. Therefore, $\alpha = 0$ and $X^H(\tau) = X^H(\tau+)$, i.e. $X^H$ (and hence $L^H$) is right-continuous.

    It remains to notice that $X^H(t-) = X^H(t)$ for any $t\in(0,T]$. Indeed, as mentioned above, $X^H$ is continuous at $t$ if $X^H(t)>0$. If $X^H(t) = 0$,  since $X^H(t)\ge 0$ can only have positive jumps,
    \[
        X^H(t) - X^H(t-) = - X^H(t-)
    \]
    and, since $X^H$ can potentially have only positive jumps, $X^H(t-) = 0$.
\end{proof}

Note that the pre-limit processes $X^H_\varepsilon$ given by \eqref{eq: pre-limit process} are monotonically non-decreasing w.r.t. $\varepsilon$. Moreover, with probability 1, their pointwise limit is continuous function  by Theorem \ref{th: main property result}. Therefore, by Dini's theorem, we immediately obtain uniform convergence which is summarized in the following corollary.

\begin{corollary}
    For any $T>0$,
    \[
        \sup_{t\in[0,T]} |X^H(t) - X^H_\varepsilon(t)| \to 0 \quad\text{a.s.}, \quad \varepsilon \downarrow 0.
    \]
\end{corollary}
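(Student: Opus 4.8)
The plan is to invoke Dini's theorem, whose hypotheses are almost entirely supplied by the preceding results. Fix $T>0$ and work pathwise on the almost-sure event on which both the Hölder bound \eqref{eq: BH Holder continuity} and the conclusion of Theorem \ref{th: main property result} hold. On this event each $X^H_\varepsilon$ is a continuous function of $t$ (being the solution of the SDE \eqref{eq: pre-limit process} with Lipschitz drift and continuous additive noise $\sigma B^H$), the family $\{X^H_\varepsilon\}_{\varepsilon>0}$ is monotone in $\varepsilon$ by the comparison \eqref{eq: comparison}, it converges pointwise to $X^H$ by \eqref{eq: limit process}, and the limit $X^H$ is continuous on the compact interval $[0,T]$. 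These are exactly the ingredients Dini's theorem requires.

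First I would rewrite the quantity to be controlled as a monotone family tending to zero. Since \eqref{eq: comparison} gives $X^H_\varepsilon(t) \le X^H(t)$ for every $t$, the differences
\[
    g_\varepsilon(t) := X^H(t) - X^H_\varepsilon(t) \ge 0
\]
are continuous, nonnegative, and satisfy $g_{\varepsilon_1}(t) \le g_{\varepsilon_2}(t)$ whenever $\varepsilon_1 < \varepsilon_2$; moreover $g_\varepsilon(t) \downarrow 0$ as $\varepsilon \downarrow 0$ for each fixed $t\in[0,T]$. Hence $\sup_{t\in[0,T]}|X^H(t) - X^H_\varepsilon(t)| = \sup_{t\in[0,T]} g_\varepsilon(t)$, and it suffices to show that this supremum vanishes as $\varepsilon \downarrow 0$.

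To deal with the continuous index, I would first restrict to the sequence $\varepsilon_n = 1/n$. The functions $g_{1/n}$ form a monotone decreasing sequence of nonnegative continuous functions on the compact set $[0,T]$ converging pointwise to $0$, so Dini's theorem yields $\sup_{t\in[0,T]} g_{1/n}(t) \to 0$. For an arbitrary $\varepsilon>0$, picking $n$ with $\varepsilon \le 1/n$ and using monotonicity gives $\sup_{t\in[0,T]} g_\varepsilon(t) \le \sup_{t\in[0,T]} g_{1/n}(t)$, which tends to $0$ as $\varepsilon \downarrow 0$. Since all of the above holds on a set of full probability, this establishes the claimed almost-sure uniform convergence.

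The point worth emphasizing is that the genuine difficulty, namely the continuity of the limit process $X^H$ (without which Dini's theorem is inapplicable), has already been resolved in Theorem \ref{th: main property result}; the remaining work is only the routine verification of Dini's hypotheses together with the elementary reduction from the continuous parameter $\varepsilon$ to a sequence.
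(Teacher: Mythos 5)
Your proposal is correct and takes essentially the same route as the paper: both invoke Dini's theorem, using the monotonicity in $\varepsilon$ from \eqref{eq: comparison}, the continuity of each $X^H_\varepsilon$, and the continuity of the limit $X^H$ established in Theorem \ref{th: main property result}. Your explicit reduction from the continuous parameter $\varepsilon$ to the sequence $\varepsilon_n = 1/n$ is a detail the paper leaves implicit, but the argument is the same.
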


\section{Estimation of Hurst index and volatility coefficient}\label{sec: Hurst and volatility}

Let us now move on to the parameter estimation for the rough Bessel processes. Our first goal is to estimate the Hurst index $H \in \left(0,\frac{1}{2}\right)$ and the volatility parameter $\sigma > 0$. Note that, by Theorem \ref{th: main property result}, the process $L^H$ is continuous and non-decreasing and hence has zero quadratic variation. Therefore, we can utilize the standard estimation technique based on power variations. For more details on this method, we refer the reader to \cite{Kubilius_Mishura_Ralchenko_2017}. 

Throughout this section, we assume that the rough Bessel process $X^H = \{X^H(t),~t\ge 0\}$ satisfies equation \eqref{eq: equation for X} with (unknown) parameters $a,\sigma>0$, $H<1/2$ and is observed on a discrete uniform partition $0=t_0<t_1<...<t_n = T$ of a fixed compact $[0,T]$, $t_k := \frac{kT}{n}$.

\subsection{Estimation of $H$}

Let us start with some useful notation for quadratic variations.

\begin{notation}
    For a stochastic process $\xi = \{\xi(t),~t\in[0,T]\}$ observed on a discrete uniform partition $0=t_0<t_1<...<t_n = T$, $t_k := \frac{kT}{n}$, denote 
    \begin{equation}\label{eq: 1st variation}
        V^n_{1,2}(\xi) := \sum_{k=0}^{n-1} (\xi(t_{k+1}) - \xi(t_k))^2
    \end{equation}
    and
    \begin{equation}\label{eq: 2nd variation}
        V^n_{2,2}(\xi) := \sum_{k=0}^{n-2} (\xi(t_{k+2}) - 2\xi(t_{k+1}) + \xi(t_k))^2.
    \end{equation}
\end{notation}

Note that the zero-mean Gaussian sequence $\{B^H(n) - B^H(n-1), n\ge 1\}$, is stationary and it is easy to check that
\begin{align*}
    \mathbb E\left[B^H(1)\left(B^H(n) - B^H(n-1)\right)\right] &= \frac{1}{2} \left( n^{2H} - 2(n-1)^{2H} +(n-2)^{2H} \right) 
    \\
    &= O(n^{2H-2})\to 0
\end{align*}
as $n\to\infty$. Hence $\{B^H(n) - B^H(n-1), k\ge 1\}$ is ergodic and, by the ergodic theorem, with probability 1,
\[
    \frac{1}{n}\sum_{k=0}^{n-1} (B^H(k+1) - B^H(k))^2 \to \mathbb E[(B^H(1))^2] = 1
\]
and
\[
    \frac{1}{n}\sum_{k=0}^{n-2} (B^H(k+2) - 2B^H(k+1) + B^H(k))^2 \to \mathbb E[(B^H(2) - 2B^H(1))^2] = 4-2^{2H}
\]
as $n\to\infty$. Furthermore, by the self-similarity property of $B^H$, 
\[
    \frac{n^{-1+2H}}{T^{2H}}\sum_{k=0}^{n-1} \left(B^H(t_{k+1}) - B^H(t_{k})\right)^2 \stackrel{\text{law}}{=} \frac{1}{n}\sum_{k=0}^{n-1} (B^H(k+1) - B^H(k))^2,
\]
\begin{align*}
    \frac{n^{-1+2H}}{T^{2H}} &\sum_{k=0}^{n-2} (B^H(t_{k+2}) - 2B^H(t_{k+1}) + B^H(t_{k}))^2 
    \\
    &\stackrel{\text{law}}{=} \frac{1}{n}\sum_{k=0}^{n-2} (B^H(k+2) - 2B^H(k+1) + B^H(k))^2,
\end{align*}
and we have the following result.

\begin{theorem}\label{th: k-p-variation consistency}
    Let $H\in(0,1)$ and $\sigma>0$. Then, as $n\to\infty$,
    \[
        \left(\frac{n}{T}\right)^{-1+2H} V^n_{1,2}(\sigma B^H) \xrightarrow{\mathbb P} \sigma^2 T
    \]
    and
    \[
        \left(\frac{n}{T}\right)^{-1+2H} V^n_{2,2}(\sigma B^H) \xrightarrow{\mathbb P}  (4-2^{2H})\sigma^2 T.
    \]
\end{theorem}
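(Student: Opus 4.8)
The plan is to assemble the two facts established in the paragraph immediately preceding the statement: the distributional self-similarity identities and the almost-sure convergence coming from the ergodic theorem. Since the driver is scaled by $\sigma$, I would first pull out the factor $\sigma^2$, which reduces both claims to the corresponding assertions for $V^n_{1,2}(B^H)$ and $V^n_{2,2}(B^H)$. The key bookkeeping step is to verify that the prefactor $(n/T)^{-1+2H}$, when combined with the self-similarity scaling factor $(T/n)^{2H}$, collapses to $T/n$:
\[
    \left(\frac{n}{T}\right)^{-1+2H} \left(\frac{T}{n}\right)^{2H} = n^{2H-1}T^{1-2H}\cdot T^{2H}n^{-2H} = \frac{T}{n}.
\]

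With this constant in hand, the self-similarity identities give, for each fixed $n$,
\[
    \left(\frac{n}{T}\right)^{-1+2H} V^n_{1,2}(\sigma B^H) \stackrel{\text{law}}{=} \sigma^2 T\cdot\frac{1}{n}\sum_{k=0}^{n-1}\left(B^H(k+1)-B^H(k)\right)^2,
\]
and the analogous identity for $V^n_{2,2}$ with $\sigma^2 T$ replaced by $\sigma^2 T$ multiplying the second-order increment average. The ergodic theorem, applicable because the increment sequence and the second-order increment sequence are stationary Gaussian with autocovariances tending to zero, guarantees that the right-hand sides converge almost surely to $\sigma^2 T\cdot\mathbb E[(B^H(1))^2]=\sigma^2 T$ and to $\sigma^2 T\cdot\mathbb E[(B^H(2)-2B^H(1))^2]=(4-2^{2H})\sigma^2 T$, respectively.

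Finally, because the left-hand sides share the distribution of these almost-surely convergent sequences for every $n$, they converge in distribution to the deterministic limits $\sigma^2 T$ and $(4-2^{2H})\sigma^2 T$. The last step is to invoke the standard fact that convergence in distribution to a constant is equivalent to convergence in probability, which upgrades the weak limit to the claimed $\xrightarrow{\mathbb P}$ statements. I do not anticipate any genuine obstacle here: the substantive inputs (the distributional self-similarity identity and the ergodicity of the Gaussian increment sequences via decaying covariances) are already in place in the excerpt, so the argument is essentially a matter of checking the normalizing constant and applying the weak-to-probability upgrade for deterministic limits.
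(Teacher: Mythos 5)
Your proposal is correct and follows essentially the same route as the paper: the paper's own argument is precisely the preceding paragraph's combination of ergodicity of the stationary Gaussian increment sequences (via decaying autocovariance), the ergodic theorem, and the self-similarity identities, with the weak-to-probability upgrade for constant limits left implicit. Your explicit verification of the normalizing constant $(n/T)^{-1+2H}(T/n)^{2H}=T/n$ and of the final equivalence between convergence in law to a constant and convergence in probability merely spells out steps the paper takes for granted.
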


\begin{remark}
    A more general result related to pathwise integrals w.r.t. $B^H$ can be found in \cite[Theorem 1]{Corcuera_Nualart_Woerner_2006}. More detailed information on the convergence rates can be found in \cite[Chapter 2]{Kubilius_Mishura_Ralchenko_2017}.
\end{remark}

As established in Theorem \ref{th: main property result}, the process $a L^H$ in the right-hand side of \eqref{eq: equation for X} is a continuous process of bounded variation and hence, with probability 1,
\[
     V^n_{1,2}(aL^H) \to 0, \quad V^n_{2,2}(aL^H) \to 0, \quad n\to\infty.
\]
Therefore, we immediately get the following corollary.

\begin{corollary}\label{cor: quadratic variation of Bessel process}
    Let $X^H$ be a rough Bessel process given by \eqref{eq: equation for X} observed on a discrete uniform partition $0=t_0<t_1<...<t_n = T$, $t_k = \frac{kT}{n}$. Then, as $n\to\infty$,
    \[
        \left(\frac{n}{T}\right)^{-1+2H} V^n_{1,2}(X^H) \xrightarrow{\mathbb P} \sigma^2 T
    \]
    and
    \[
        \left(\frac{n}{T}\right)^{-1+2H} V^n_{2,2}(X^H) \xrightarrow{\mathbb P} (4-2^{2H})\sigma^2 T.
    \]
\end{corollary}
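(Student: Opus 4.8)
The plan is to exploit the linearity of both difference operators appearing in \eqref{eq: 1st variation} and \eqref{eq: 2nd variation} together with the decomposition $X^H(t) = x_0 + aL^H(t) + \sigma B^H(t)$ coming from \eqref{eq: equation for X with L}. Since the constant $x_0$ has vanishing increments, the first-order increments satisfy $X^H(t_{k+1}) - X^H(t_k) = a\left(L^H(t_{k+1}) - L^H(t_k)\right) + \sigma\left(B^H(t_{k+1}) - B^H(t_k)\right)$, and the analogous identity holds for the second-order differences $X^H(t_{k+2}) - 2X^H(t_{k+1}) + X^H(t_k)$. Squaring and summing, I would expand each power variation as
\[
    V^n_{i,2}(X^H) = V^n_{i,2}(\sigma B^H) + 2 a\sigma\, C^n_i + V^n_{i,2}(aL^H), \quad i\in\{1,2\},
\]
where $C^n_i$ denotes the corresponding sum of cross products of increments (resp.\ second differences) of $L^H$ and $B^H$.

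The first summand is handled directly by Theorem \ref{th: k-p-variation consistency}, which gives $\left(\frac{n}{T}\right)^{-1+2H} V^n_{1,2}(\sigma B^H) \xrightarrow{\mathbb P} \sigma^2 T$ and $\left(\frac{n}{T}\right)^{-1+2H} V^n_{2,2}(\sigma B^H) \xrightarrow{\mathbb P} (4-2^{2H})\sigma^2 T$. For the third summand, I would invoke Theorem \ref{th: main property result}: the process $aL^H$ is continuous and non-decreasing, hence of bounded variation, so $V^n_{i,2}(aL^H) \to 0$ almost surely. Since $H < \frac{1}{2}$ forces the normalizing exponent $-1+2H$ to be negative and thus $\left(\frac{n}{T}\right)^{-1+2H} \to 0$, the normalized third summand converges to zero as well.

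It remains to control the cross term, which is the only step requiring a genuine argument. I would bound it by the Cauchy--Schwarz inequality,
\[
    \left(\tfrac{n}{T}\right)^{-1+2H} |C^n_i| \le \sqrt{\left(\tfrac{n}{T}\right)^{-1+2H} V^n_{i,2}(L^H)} \cdot \sqrt{\left(\tfrac{n}{T}\right)^{-1+2H} V^n_{i,2}(B^H)}.
\]
The second factor is bounded in probability by Theorem \ref{th: k-p-variation consistency} (indeed it converges to a deterministic limit), while the first factor tends to zero exactly as in the previous paragraph, since $V^n_{i,2}(L^H)\to 0$ and $\left(\frac{n}{T}\right)^{-1+2H}\to 0$. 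Hence the normalized cross term vanishes in probability, and combining the three contributions via Slutsky's theorem yields the claimed limits.

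The main (and essentially only) obstacle is the cross term, and even it is mild: the crucial input, namely the continuity and monotonicity of $L^H$ that guarantees its vanishing quadratic variation, is precisely what Theorem \ref{th: main property result} was established to provide. Everything else is a routine combination of the power-variation asymptotics for $B^H$ with the bounded-variation behavior of $aL^H$.
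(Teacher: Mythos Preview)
Your proposal is correct and follows exactly the paper's approach: decompose $X^H = x_0 + aL^H + \sigma B^H$, apply Theorem~\ref{th: k-p-variation consistency} to the fractional Brownian part, and use Theorem~\ref{th: main property result} to make the $aL^H$ contribution vanish. Your explicit Cauchy--Schwarz bound on the cross term is the natural way to fill in the step the paper summarizes as ``therefore, we immediately get the following corollary''; nothing is missing.
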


Next, define
\begin{equation}\label{eq: estimator of H}
    \widehat H := \frac{\log\left( 4 - \frac{V^n_{2,2}(X^H)}{V^n_{1,2}(X^H)} \right)}{2\log 2}.
\end{equation}

\begin{theorem}
    The estimator $\widehat H$ given by \eqref{eq: estimator of H} is (weakly) consistent estimator of the Hurst index $H$, i.e.
    \[
        \widehat H \xrightarrow{\mathbb P} H, \quad n \to \infty.
    \]
\end{theorem}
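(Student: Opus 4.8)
The plan is to derive the consistency of $\widehat H$ directly from Corollary \ref{cor: quadratic variation of Bessel process} via the continuous mapping theorem. First I would observe that by Corollary \ref{cor: quadratic variation of Bessel process}, the common normalizing factor $\left(\frac{n}{T}\right)^{-1+2H}$ cancels in the ratio, so that
\begin{equation}\label{eq: ratio of variations}
    \frac{V^n_{2,2}(X^H)}{V^n_{1,2}(X^H)} = \frac{\left(\frac{n}{T}\right)^{-1+2H}V^n_{2,2}(X^H)}{\left(\frac{n}{T}\right)^{-1+2H}V^n_{1,2}(X^H)} \xrightarrow{\mathbb P} \frac{(4-2^{2H})\sigma^2 T}{\sigma^2 T} = 4 - 2^{2H}.
\end{equation}
Here the convergence of the ratio follows from the joint convergence in probability of numerator and denominator (which is implied by their individual convergence in probability, since convergence in probability of marginals yields joint convergence) together with the fact that the limiting denominator $\sigma^2 T$ is a strictly positive constant. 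The latter point is what makes the quotient map continuous at the limit point, so Slutsky's theorem applies.

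Next I would feed the limit \eqref{eq: ratio of variations} into the explicit formula \eqref{eq: estimator of H} for $\widehat H$. Writing
\[
    \widehat H = g\!\left( \frac{V^n_{2,2}(X^H)}{V^n_{1,2}(X^H)} \right), \qquad g(x) := \frac{\log(4 - x)}{2\log 2},
\]
I would note that $g$ is continuous on a neighbourhood of the limit value $4 - 2^{2H}$: indeed, since $H\in\left(0,\frac12\right)$ we have $2^{2H}\in(1,2)$, so $4 - 2^{2H}\in(2,3)$ and in particular $4 - (4 - 2^{2H}) = 2^{2H} > 0$, which keeps the argument of the logarithm strictly positive. Hence the continuous mapping theorem gives
\[
    \widehat H \xrightarrow{\mathbb P} g\!\left(4 - 2^{2H}\right) = \frac{\log\left(2^{2H}\right)}{2\log 2} = \frac{2H\log 2}{2\log 2} = H,
\]
which is exactly the desired consistency.

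The argument is essentially a routine assembly of Corollary \ref{cor: quadratic variation of Bessel process}, Slutsky's theorem, and the continuous mapping theorem, so there is no deep obstacle; the only point requiring minor care is the justification that the ratio of two sequences each converging in probability converges in probability to the ratio of the limits. This is safe precisely because the denominator's limit $\sigma^2 T$ is a nonzero deterministic constant, so division is continuous at the limit and no degeneracy arises. I would state this explicitly rather than gloss over it, since the whole estimator is built to exploit this cancellation of the unknown normalization $\left(\frac{n}{T}\right)^{-1+2H}$, which is the mechanism that makes $\widehat H$ computable without prior knowledge of $H$ itself.
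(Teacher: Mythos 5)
Your proof is correct and follows exactly the paper's approach: the paper's own proof is a one-line appeal to Corollary \ref{cor: quadratic variation of Bessel process} and the continuous mapping theorem, and your argument simply spells out the details (cancellation of the normalization in the ratio, Slutsky for the quotient, and continuity of $x \mapsto \log(4-x)/(2\log 2)$ at $4-2^{2H}$) that the paper leaves implicit.
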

\begin{proof}
    The statement follows immediately from Corollary \ref{cor: quadratic variation of Bessel process} and the continuous mapping theorem.
\end{proof}

\begin{remark}
    By Corollary \ref{cor: quadratic variation of Bessel process},
    \[
        4 - \frac{V^n_{2,2}(X^H)}{V^n_{1,2}(X^H)} \xrightarrow{\mathbb P} 2^{2H} > 1, \quad n\to\infty,
    \]
    and hence, in practice, the logarithm in \eqref{eq: estimator of H} is well-defined for large enough values of $n$. Moreover,
    \begin{equation}\label{eq: Cauchy}
    \begin{aligned}
        V^n_{2,2}(X^H) &= \sum_{k=0}^{n-2} (X^H(t_{k+2}) - 2X^H(t_{k+1}) + X^H(t_k))^2
        \\
        & = \sum_{k=0}^{n-2} \left((X^H(t_{k+2}) - X^H(t_{k+1})) - (X^H(t_{k+1}) - X^H(t_k))\right)^2
        \\
        &\le 2 \sum_{k=0}^{n-2} (X^H(t_{k+2}) - X^H(t_{k+1}))^2 + 2 \sum_{k=0}^{n-2} (X^H(t_{k+1}) - X^H(t_k))^2
        \\
        & \le 4V^n_{1,2}(X^H),
    \end{aligned}
    \end{equation}
    i.e.
    \[
        4 - \frac{V^n_{2,2}(X^H)}{V^n_{1,2}(X^H)} \ge 0,
    \]
    and the equality in \eqref{eq: Cauchy} occurs only if $x_0 = X^H(t_0) = X^H(t_1) = ... = X^H(t_n)$. It remains an open question whether such an event can occur with positive probability but, in practice, it is never the case.
\end{remark}

\subsection{Estimation of $\sigma$}

Next, assume that the Hurst index $H$ is known and the goal is to estimate the volatility coefficient $\sigma$. Using Corollary \ref{cor: quadratic variation of Bessel process}, it can be done in a straightforward manner. Namely, we have the following result.

\begin{theorem}
    Let the Hurst index $H$ be known. Then the estimator
    \begin{equation}\label{eq: sigma estimator}
        \widehat\sigma = \frac{1}{T^H}\sqrt{ n^{-1+2 H} V^n_{1,2}(X^H)}
    \end{equation} is (weakly) consistent estimator of the volatility coefficient $\sigma$, i.e.
    \[
        \widehat \sigma \xrightarrow{\mathbb P} \sigma, \quad n \to \infty.
    \]
\end{theorem}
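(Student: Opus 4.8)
The plan is to reduce everything to Corollary \ref{cor: quadratic variation of Bessel process} via elementary algebra and the continuous mapping theorem; no new probabilistic input is needed. First I would square the estimator to dispose of the inconvenient square root, writing
\[
    \widehat\sigma^2 = \frac{1}{T^{2H}}\, n^{-1+2H} V^n_{1,2}(X^H).
\]
The goal is then to show $\widehat\sigma^2 \xrightarrow{\mathbb P} \sigma^2$, after which the conclusion $\widehat\sigma \xrightarrow{\mathbb P} \sigma$ follows by applying the continuous mapping theorem to the map $x \mapsto \sqrt{x}$ (continuous on $[0,\infty)$, and in particular at the limit point $\sigma^2 > 0$).

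Next I would connect the factor $n^{-1+2H}$ to the normalization $\left(\frac{n}{T}\right)^{-1+2H}$ appearing in the corollary. Since $\left(\frac{n}{T}\right)^{-1+2H} = n^{-1+2H} T^{1-2H}$, one has
\[
    n^{-1+2H} V^n_{1,2}(X^H) = T^{2H-1}\left(\frac{n}{T}\right)^{-1+2H} V^n_{1,2}(X^H),
\]
so that by Corollary \ref{cor: quadratic variation of Bessel process} the right-hand side converges in probability to $T^{2H-1}\cdot \sigma^2 T = \sigma^2 T^{2H}$. Substituting this back yields
\[
    \widehat\sigma^2 = \frac{1}{T^{2H}}\, n^{-1+2H} V^n_{1,2}(X^H) \xrightarrow{\mathbb P} \frac{1}{T^{2H}}\cdot \sigma^2 T^{2H} = \sigma^2,
\]
and an application of the continuous mapping theorem finishes the argument.

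There is no genuine obstacle here: the statement is a direct corollary of the already-established convergence of $V^n_{1,2}(X^H)$. The only thing requiring minor care is the bookkeeping of the powers of $T$ and $n$—in particular verifying that the prefactor $T^{-2H}$ in the definition of $\widehat\sigma$ exactly cancels the $T^{2H}$ produced in the limit—and noting that the continuous mapping theorem applies because the square root is continuous at the strictly positive limit $\sigma^2$. Everything else is automatic.
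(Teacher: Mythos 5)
Your proof is correct and follows exactly the same route as the paper, which simply invokes Corollary \ref{cor: quadratic variation of Bessel process} together with the continuous mapping theorem; you have merely written out the power-of-$T$ bookkeeping and the square-root step that the paper leaves implicit. No gaps, and no meaningful difference in approach.
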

\begin{proof}
    The statement follows immediately from Corollary \ref{cor: quadratic variation of Bessel process} and the continuous mapping theorem.
\end{proof}

\section{Drift parameter estimation}\label{sec: drift}

Let us move to the estimation of the drift parameter $a$ in \eqref{eq: equation for X}. In contrast to the high-frequency setting considered in Section \ref{sec: Hurst and volatility}, we will now assume that we observe a continuous path of $X^H$ on $[0,T]$ with $T \to \infty$. 
We study the following estimator of the parameter $a$
\begin{equation}\label{eq: estimator of a 1}
    \widehat a(T) := \frac{X^H(T)}{ \int_0^T \frac{1}{X^H(t)}dt}.
\end{equation}

\begin{remark}
    Note that we do not assume any prior knowledge of the parameters $H$ or $\sigma$.
\end{remark}

Let us start by presenting a well-known result related to the growth of fractional Brownian motion (for more details, see \cite{Kozachenko_Melnikov_Mishura_2015}, \cite[Section B 3.5]{Kubilius_Mishura_Ralchenko_2017} or \cite[Corollary 2.2]{Mishura_Ralchenko_2023}).

\begin{theorem}\label{th: growth of fBm}
    For any $\delta > 0$ there exist random time $\tau_\delta$ and positive random variable $\xi$  such that, with probability 1,
    \begin{equation}\label{eq: growth of BH}
        \max_{s\in[0,T]} |B^H(t)| \le \xi T^{H+\delta}
    \end{equation}
    for all $T>\tau_\delta$.
\end{theorem}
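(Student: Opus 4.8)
The plan is to exploit two fundamental properties of fractional Brownian motion: its self-similarity of order $H$ and the Gaussian concentration of the supremum of its paths over a compact interval. Throughout, I would write $\eta := \max_{t\in[0,1]}|B^H(t)|$, which is an almost surely finite, nonnegative random variable because on the compact $[0,1]$ the process $B^H$ admits continuous (indeed H\"older continuous) sample paths.

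First I would record the distributional identity coming from self-similarity: for every integer $n\ge 0$,
\[
    \max_{t\in[0,2^n]}|B^H(t)| \stackrel{\text{law}}{=} 2^{nH}\,\eta,
\]
which follows by the change of variable $t = 2^n s$ and the scaling $\{B^H(2^n s)\}_s \stackrel{\text{law}}{=} \{2^{nH}B^H(s)\}_s$. Next, I would invoke Fernique's theorem (equivalently, the Borell--TIS inequality): since $B^H$ is a centered Gaussian process with almost surely bounded paths on $[0,1]$, its supremum $\eta$ possesses a Gaussian-type tail, i.e. there exist constants $c,C>0$ such that
\[
    \mathbb P(\eta > x) \le C e^{-c x^2}, \quad x>0.
\]
This tail estimate is the heart of the argument; everything else is a routine Borel--Cantelli blocking scheme.

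With these two facts in hand, the remaining steps are mechanical. Fix $\delta>0$ and set $M_n := \max_{t\in[0,2^n]}|B^H(t)|$. Using the distributional identity and the Gaussian tail,
\[
    \mathbb P\left(M_n > 2^{n(H+\delta)}\right) = \mathbb P\left(\eta > 2^{n\delta}\right) \le C e^{-c\, 2^{2n\delta}},
\]
and the right-hand side is summable in $n$. By the Borel--Cantelli lemma there is an almost surely finite random index $N = N(\omega)$ such that $M_n \le 2^{n(H+\delta)}$ for all $n\ge N$. Finally I would interpolate to arbitrary $T$: for $T\ge 1$ choose the integer $n$ with $2^n \le T < 2^{n+1}$ and combine monotonicity of the maximum in the interval with the dyadic bound to get
\[
    \max_{t\in[0,T]}|B^H(t)| \le M_{n+1} \le 2^{(n+1)(H+\delta)} \le 2^{H+\delta}\, T^{H+\delta},
\]
valid as soon as $n\ge N$. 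Taking $\tau_\delta := 2^N$ and $\xi := 2^{H+\delta}$ then completes the proof.

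The main obstacle is the Gaussian tail bound on $\eta$: once the path supremum is dominated by a single scalar random variable with rapidly decaying tail, self-similarity converts control on $[0,1]$ into control on every dyadic window, and Borel--Cantelli does the rest. If one prefers to avoid quoting Fernique directly, an alternative is to bound $\eta$ through the H\"older constant $\Lambda$ from \eqref{eq: BH Holder continuity}, whose moments of all orders are finite by \cite{ASVY2014}; this also produces a summable tail and leads to the same conclusion, now with a random $\xi$ expressed through $\Lambda$.
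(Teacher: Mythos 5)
Your proof is correct, but it is worth noting that the paper itself does not prove this theorem at all: it quotes the statement as a known result and refers to the literature (Kozachenko--Melnikov--Mishura, the book of Kubilius--Mishura--Ralchenko, and Mishura--Ralchenko) for details. What you have written is a legitimate self-contained derivation of that cited fact: the distributional identity $\max_{t\in[0,2^n]}|B^H(t)|\stackrel{\text{law}}{=}2^{nH}\eta$ is exactly self-similarity, the Fernique/Borell--TIS bound gives the summable tail $\mathbb P(\eta>2^{n\delta})\le Ce^{-c\,2^{2n\delta}}$, and the Borel--Cantelli dyadic blocking plus the interpolation $2^n\le T<2^{n+1}$ yields the claim with $\tau_\delta=2^N$ and the (even deterministic) constant $\xi=2^{H+\delta}$, which is stronger than the statement requires. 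The only cosmetic caveat is that the bound $M_{n+1}\le 2^{(n+1)(H+\delta)}$ formally requires $n+1\ge N$, which your choice $T>2^N$ does guarantee. Your closing remark is also apt: since the paper already assumes the modification of $B^H$ whose H\"older constant $\Lambda$ has all moments finite, one can replace Fernique by the bound $\eta\le\Lambda$ and Markov's inequality, making the argument rely only on tools the paper has introduced; this buys simplicity at the cost of a slightly weaker (but still summable) tail estimate, and is probably the route closest in spirit to the references the authors cite.
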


\begin{lemma}\label{lemma: growth of 1 over X}
    Let $X^H$ be a rough Bessel process satisfying \eqref{eq: equation for X}. Then, with probability 1,
    \begin{equation}\label{eq: asymptotics 1}
        \liminf_{T\to\infty} \frac{1}{\sqrt{T}} \int_0^T \frac{1}{X^H(t)}dt > 0.
    \end{equation}
\end{lemma}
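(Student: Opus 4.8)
The plan is to combine a Cauchy–Schwarz inequality with the explicit dynamics \eqref{eq: equation for X} to obtain a self-referential (quadratic) estimate for $I(T) := \int_0^T \frac{1}{X^H(t)}\,dt$, from which the $\sqrt{T}$-lower bound will follow. First I would reduce to a clean form of the equation. By Corollary \ref{cor: X is Bessel process}, with probability $1$ there is a random $\tau>0$ such that $R^H(t)=R^H(\tau)$ for all $t\ge\tau$, so for $t\ge\tau$ equation \eqref{eq: equation for X} becomes
\begin{equation}\label{eq: plan clean eq}
    X^H(t) = C + a\,I(t) + \sigma B^H(t), \qquad C := x_0 + a R^H(\tau) > 0,
\end{equation}
where $C>0$ because $R^H\ge 0$ by Fatou's lemma. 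On the compact $[0,\tau]$ the continuous process $X^H$ (Theorem \ref{th: main property result}) is bounded by a finite random constant $M_0$. I would also fix $\delta>0$ small enough that $H+\delta<\tfrac12$ (possible since $H<\tfrac12$) and invoke Theorem \ref{th: growth of fBm} to get a random $\xi>0$ with $\max_{t\in[0,T]}|B^H(t)|\le \xi T^{H+\delta}$ for all large $T$.

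Next I would apply Cauchy–Schwarz on $[0,T]$, using that $X^H(t)>0$ for almost every $t$ and that both integrals below are finite:
\begin{equation}\label{eq: plan CS}
    T = \int_0^T \frac{1}{\sqrt{X^H(t)}}\cdot \sqrt{X^H(t)}\,dt \le \sqrt{I(T)}\cdot\sqrt{\int_0^T X^H(t)\,dt}.
\end{equation}
Writing $M(T):=\max_{t\in[0,T]}X^H(t)$ and bounding $\int_0^T X^H(t)\,dt \le T\,M(T)$, inequality \eqref{eq: plan CS} yields $T \le I(T)\,M(T)$. On the other hand, combining \eqref{eq: plan clean eq} with the monotonicity of $I$ and the growth bound on $B^H$, for large $T$ one has the upper estimate
\begin{equation}\label{eq: plan M bound}
    M(T) \le M_0 + C + a\,I(T) + \sigma\xi\,T^{H+\delta}.
\end{equation}

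Substituting \eqref{eq: plan M bound} into $T\le I(T)M(T)$ gives, with $C':=M_0+C$,
\begin{equation}\label{eq: plan quadratic}
    a\,I(T)^2 + \bigl(C' + \sigma\xi\,T^{H+\delta}\bigr)I(T) - T \ge 0,
\end{equation}
a quadratic inequality in $I(T)$ with $a>0$. Since $I(T)>0$, \eqref{eq: plan quadratic} forces $I(T)$ to exceed the positive root, i.e.
\begin{equation}\label{eq: plan root}
    I(T) \ge \frac{-\bigl(C'+\sigma\xi\,T^{H+\delta}\bigr)+\sqrt{\bigl(C'+\sigma\xi\,T^{H+\delta}\bigr)^2 + 4aT}}{2a}.
\end{equation}
Dividing by $\sqrt{T}$ and letting $T\to\infty$, the crucial point is that $H+\delta<\tfrac12$ makes $\bigl(C'+\sigma\xi T^{H+\delta}\bigr)/\sqrt{T}\to 0$ and $\bigl(C'+\sigma\xi T^{H+\delta}\bigr)^2/T\to 0$, so \eqref{eq: plan root} gives $\liminf_{T\to\infty} I(T)/\sqrt{T} \ge 1/\sqrt{a} > 0$, which is the claim.

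The main obstacle is recognizing the self-consistent structure: both the Cauchy–Schwarz lower bound for $I(T)$ and the equation-driven upper bound for $M(T)$ involve $I(T)$ itself, and the argument closes precisely because these feed into the single quadratic inequality \eqref{eq: plan quadratic}. The subsidiary technical care is in justifying \eqref{eq: plan clean eq} and \eqref{eq: plan M bound} (eventual constancy of $R^H$ via Corollary \ref{cor: X is Bessel process}, and the sublinear-in-$\sqrt{T}$ growth of the fractional Brownian contribution via Theorem \ref{th: growth of fBm}), together with checking that all random constants $M_0$, $C$, $\xi$ are finite and independent of $T$ so that the limit is genuinely strictly positive.
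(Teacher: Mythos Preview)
Your proof is correct. Both your argument and the paper's hinge on the same two ingredients---the Cauchy--Schwarz inequality $\int_0^T X^H(t)\,dt\cdot\int_0^T \frac{1}{X^H(t)}\,dt \ge T^2$ and the sublinear growth $|B^H(t)|\le \xi t^{H+\delta}$ with $H+\delta<\tfrac12$---but they are organised differently. The paper argues by contradiction: assuming $I(T_n)/\sqrt{T_n}\to 0$ along a subsequence, it \emph{integrates} the equation once more to obtain an expression for $\int_0^{T_n}X^H(t)\,dt$, then shows that Cauchy--Schwarz forces $T_n^{-3/2}\int_0^{T_n}X^H(t)\,dt\to\infty$ while the integrated right-hand side divided by $T_n^{3/2}$ tends to $0$. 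Your route is direct: you bound $\int_0^T X^H\le T\,M(T)$, feed the equation back into $M(T)$, and close the loop via the quadratic inequality \eqref{eq: plan quadratic}. Your approach avoids the extra time integration and, as a bonus, yields the explicit quantitative bound $\liminf_{T\to\infty} I(T)/\sqrt{T}\ge 1/\sqrt{a}$, which the paper's contradiction argument does not produce.
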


\begin{proof}
    Take an arbitrary $\delta\in\left(0, \frac{1}{2} - H\right)$ and fix an $\omega\in\Omega$ such that the corresponding path of $B^H$ is continuous, \eqref{eq: growth of BH} holds and $R^H$ becomes constant after some random time point $\tau$ as described in Corollary \ref{cor: X is Bessel process}.

    Assume that \eqref{eq: asymptotics 1} does not hold, i.e. there exists an increasing sequence $\{T_n,~n\ge 1\}$ such that $T_n\to \infty$ as $n\to\infty$, $T_1$ exceeds $\tau_\delta$ from Theorem \ref{th: growth of fBm}, and
    \[
        \frac{1}{\sqrt{T_n}} \int_0^{T_n} \frac{1}{X^H(t)}dt \to 0, \quad n\to \infty.
    \]
    Integrating both sides of \eqref{eq: equation for X} from $0$ to $T_n$, we obtain that for all $n\ge 1$
    \begin{equation}\label{proofeq: integrated fractional Bessel process}
    \begin{aligned}
        \int_0^{T_n} X^H(t) dt &= x_0 T_n + a \int_0^{T_n} \int_0^t \frac{1}{X^H(s)}ds dt 
        \\
        &\quad +  \sigma \int_0^{T_n} B^H(t) dt + a \int_0^{T_n} R^H(t) dt.
    \end{aligned}
    \end{equation}
    Let us prove that the equality \eqref{proofeq: integrated fractional Bessel process} cannot hold by comparing the asymptotics of its left- and right-hand sides.
    
    On the one hand, using the Cauchy-Schwartz inequality, it is easy to see that for all $T>0$
    \[
        \int_0^T X^H(t)dt \int_0^T \frac{1}{X^H(t)}dt \ge T^2,
    \]
    so
    \[
        \frac{1}{T^{3/2}_n} \int_0^{T_n} X^H(s)ds \to \infty, \quad n \to \infty.
    \]
    On the other hand,
    \[
        \frac{x_0 T_n}{T^{3/2}_n} \to 0, \quad n \to \infty,
    \]
    \begin{align*}
        \frac{1}{T^{3/2}_n} \int_0^{T_n} \int_0^t \frac{1}{X^H(s)}ds dt &= \frac{1}{T^{3/2}_n} \int_0^{T_n} \frac{1}{X^H(t)} (T_n - t)dt
        \\
        & \le \frac{T_n}{T^{3/2}_n}  \int_0^{T_n} \frac{1}{X^H(t)} dt 
        \\
        &\to 0 , \quad n \to \infty,
    \end{align*}
    and, by \eqref{eq: growth of BH},
    \begin{align*}
        \frac{1}{T^{3/2}_n} \left| \int_0^{T_n} B^H(t) dt \right| & \le \frac{T^{1+H+\delta}}{T^{3/2}_n} \frac{\xi}{1+H+\delta} \to 0, \quad n \to \infty.
    \end{align*}
    Finally, since $R^H$ is constant after some time point $\tau$,
    \[
        \frac{1}{T^{3/2}_n} \int_0^{T_n} R^H(t) dt \to 0, \quad n\to \infty.
    \]
    In other words, if such sequence $\{T_n,~n\ge 1\}$ exists, the left-hand side of \eqref{proofeq: integrated fractional Bessel process} divided by $T_n^{3/2}$ converges to $\infty$ whereas the right-hand side divided by $T_n^{3/2}$ converges to zero. We get a contradiction that proves \eqref{eq: asymptotics 1}.
\end{proof}

We are now ready to move to the main result of this section.

\begin{theorem}
    With probability 1,
     \[
        \widehat a(T) \to a, \quad T\to\infty.
    \]
\end{theorem}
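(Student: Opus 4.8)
The plan is to show $\widehat a(T) = X^H(T) / \int_0^T \frac{1}{X^H(t)}\,dt \to a$ by dividing the defining equation \eqref{eq: equation for X} through by the denominator $\int_0^T \frac{1}{X^H(t)}\,dt$ and checking that every term except the one producing $a$ becomes negligible as $T\to\infty$. Explicitly, from \eqref{eq: equation for X},
\[
    \frac{X^H(T)}{\int_0^T \frac{1}{X^H(t)}\,dt} = \frac{x_0}{\int_0^T \frac{1}{X^H(t)}\,dt} + a\,\frac{\int_0^T \frac{1}{X^H(t)}\,dt}{\int_0^T \frac{1}{X^H(t)}\,dt} + \frac{\sigma B^H(T) + a R^H(T)}{\int_0^T \frac{1}{X^H(t)}\,dt}.
\]
The middle term is identically $a$, so it suffices to prove that the first and third terms vanish as $T\to\infty$, almost surely.

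First I would fix a generic $\omega$ for which the path of $B^H$ is continuous, the growth bound \eqref{eq: growth of BH} holds for some $\delta\in\bigl(0,\tfrac12-H\bigr)$, $R^H$ becomes constant after the random time $\tau$ from Corollary \ref{cor: X is Bessel process}, and the conclusion of Lemma \ref{lemma: growth of 1 over X} is in force. The key engine is Lemma \ref{lemma: growth of 1 over X}, which gives a positive constant $c=c(\omega)>0$ and a threshold $T_0$ such that $\int_0^T \frac{1}{X^H(t)}\,dt \ge c\sqrt{T}$ for all $T\ge T_0$; since the denominator is also monotone increasing in $T$ (the integrand is positive) and tends to infinity, the first term $x_0/\int_0^T \frac{1}{X^H(t)}\,dt$ is bounded above by $x_0/(c\sqrt T)\to 0$.

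For the third term I would treat the two pieces of the numerator separately. The term $a R^H(T)$ is eventually constant (equal to $a R^H(\tau)$) by Corollary \ref{cor: X is Bessel process}, so dividing it by $\int_0^T \frac{1}{X^H(t)}\,dt\ge c\sqrt T$ again gives something of order $T^{-1/2}\to 0$. For the fractional-Brownian piece, the growth estimate \eqref{eq: growth of BH} yields $|\sigma B^H(T)| \le \sigma\,\xi\,T^{H+\delta}$ for all $T>\tau_\delta$, and hence
\[
    \frac{|\sigma B^H(T)|}{\int_0^T \frac{1}{X^H(t)}\,dt} \le \frac{\sigma\,\xi\,T^{H+\delta}}{c\sqrt T} = \frac{\sigma\,\xi}{c}\,T^{H+\delta-\frac12} \to 0,
\]
because the choice $\delta<\tfrac12-H$ makes the exponent $H+\delta-\tfrac12$ strictly negative. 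Combining the three estimates, all terms but the middle one vanish, so $\widehat a(T)\to a$ almost surely.

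The main obstacle is not any single estimate but making sure the lower bound on the denominator is genuinely usable: Lemma \ref{lemma: growth of 1 over X} is phrased as $\liminf_{T\to\infty} T^{-1/2}\int_0^T \frac{1}{X^H(t)}\,dt>0$, which a priori only guarantees the existence of such a constant $c$ and threshold $T_0$ along the eventual behavior of $T$, so I would be careful to record that the liminf being strictly positive is exactly what licenses the uniform bound $\int_0^T \frac{1}{X^H(t)}\,dt\ge c\sqrt T$ for all large $T$. Everything else is a routine comparison of powers of $T$, with the delicate balance being that the fractional Brownian growth exponent $H+\delta$ stays below the denominator's growth exponent $\tfrac12$ — which is precisely why the restriction $H<\tfrac12$ (and the freedom to pick $\delta$ small) is essential.
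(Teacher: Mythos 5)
Your proof is correct and follows essentially the same route as the paper: decompose $\widehat a(T)$ via equation \eqref{eq: equation for X}, then kill the $x_0$, $\sigma B^H(T)$, and $aR^H(T)$ terms using Lemma \ref{lemma: growth of 1 over X}, the growth bound \eqref{eq: growth of BH} with $\delta<\tfrac12-H$, and the eventual constancy of $R^H$ from Corollary \ref{cor: X is Bessel process}. Your extra care in converting the $\liminf$ statement into an explicit bound $\int_0^T \frac{1}{X^H(t)}\,dt\ge c\sqrt{T}$ for large $T$ is a harmless (and slightly more explicit) rendering of the same estimate the paper uses.
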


\begin{proof}
    First of all, note that, by Lemma \ref{lemma: growth of 1 over X}, 
    \[
        \frac{x_0}{\int_0^T \frac{1}{X^H(s)}ds} \to 0 \quad \text{a.s.}
    \]
    when $T \to \infty$. Next, since there exists $\tau >0$ such that $R^H(t) = 0$ for all $t\ge \tau$, Lemma \ref{lemma: growth of 1 over X} also implies that
    \[
        \frac{a R^H(t)}{\int_0^T \frac{1}{X^H(s)}ds} \to 0 \quad \text{a.s.}
    \]
    when $T \to \infty$. Finally, fixing $\delta \in \left(0, \frac{1}{2} - H\right)$ and using \eqref{eq: growth of BH}, we can deduce that
    \begin{equation*}
    \begin{aligned}
        \frac{|B^H(T)|}{\int_0^T \frac{1}{X^H(s)}ds} & \le \frac{1}{T^{\frac{1}{2} - H - \delta}}\frac{\xi}{\frac{1}{\sqrt{T}} \int_0^T \frac{1}{X^H(s)}ds } \to 0 \quad \text{a.s.}
    \end{aligned}    
    \end{equation*}
    as $T\to \infty$. Therefore, with probability 1,
    \begin{equation*}
    \begin{aligned}
        \widehat a_1(T) &= \frac{x_0 + a\int_0^t \frac{1}{X^H(s)}ds + \sigma B^H(t) + a R^H(t)}{ \int_0^T \frac{1}{X^H(s)}ds }
        \\
        & = a + \frac{x_0}{\int_0^T \frac{1}{X^H(s)}ds} + \frac{\sigma B^H(t)}{\int_0^T \frac{1}{X^H(s)}ds} + \frac{a R^H(t)}{\int_0^T \frac{1}{X^H(s)}ds}
        \\
        & \to a, \quad T\to\infty.
    \end{aligned}   
    \end{equation*}  
\end{proof}

\section{Simulations}\label{sec: simulations}

In this section, we illustrate our results with simulations. In all cases, we use the standard Euler scheme to simulate the pre-limit processes $X^H_\varepsilon$ given by \eqref{eq: pre-limit process} with $\varepsilon = 0.0001$. Each estimator was tested on 1000 samples. All simulations were performed using \textsf{R} programming language; in order to simulate trajectories of fractional Brownian motion, the package \textsf{somebm} was used.

\subsection{Simulation of $\widehat H$ and $\widehat \sigma$}\label{subsec: sim H and sigma}

We start by testing the estimators $\widehat H$ and $\widehat \sigma$ given by \eqref{eq: estimator of H} and \eqref{eq: sigma estimator} respectively. The real values of the parameters are chosen to be $H=0.3$ and $\sigma = 1$ whereas the time horizon is $T=1$. 

At first, we analyze $\widehat H$ for different sizes $n$ of the partition. For each $n$, 1000 trajectories of $X^H$ were generated and the value of $\widehat H$ was computed for each of the generated paths. Table \ref{table: H} contains mean, variance and coefficient of variation (CV, i.e. the ratio of the standard deviation to the mean) for $\widehat H$ and Figure  \ref{fig: H} depicts the corresponding box-and-whisker plots.

\begin{table}[h!]
\centering
\begin{tabular}{|c|c|c|c|c|}
\hline
         & $n=100$ & $n=1000$ & $n=10000$ & $n=100000$ \\ \hline
Mean     & 0.3074612      &   0.3018724     &  0.299684       &   0.2999879     \\
Variance &  0.007608759     &  0.0006979747      & 0.00007438351        &  0.000007596678      \\
CV       &  0.2837047     &  0.08751781      & 0.02877894        &  0.009187727      \\ \hline
\end{tabular}
\caption{Performance of the estimator $\widehat H$ given by \eqref{eq: estimator of H} for different sizes of the partition. The real value is $H=0.3$.}\label{table: H}
\end{table}

\begin{figure}[h!]
    \centering
    \includegraphics[width = \textwidth]{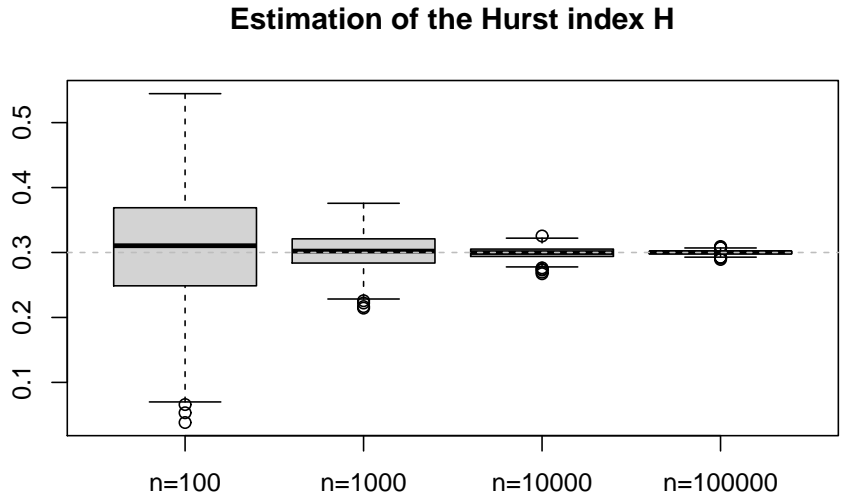}
    \caption{Performance of the estimator $\widehat H$ given by \eqref{eq: estimator of H} for different sizes of the partition. The real value $H=0.3$ is depicted with a grey dashed line.}\label{fig: H}
\end{figure}

Next, we perform the same procedure for the estimator $\widehat \sigma$ given by \eqref{eq: sigma estimator} under assumption that the real value of $H$ is known. The results are given in Table \ref{table: sigma} and Figure \ref{fig: sigma}

\begin{table}[h!]
\centering
\begin{tabular}{|c|c|c|c|c|}
\hline
         & $n=100$ & $n=1000$ & $n=10000$ & $n=100000$ \\ \hline
Mean     &  0.9964649     & 0.9999765       & 0.9996666        &  0.9999794      \\
Variance & 0.005895953      & 0.0006022594       &  0.00005529735      & 0.000005777412       \\
CV       & 0.07705752      & 0.02454155       & 0.007438699        & 0.002403674       \\ \hline
\end{tabular}
\caption{Performance of the estimator $\widehat \sigma$ given by \eqref{eq: sigma estimator} for different sizes of the partition under assumption that $H$ is known. The real value is $\sigma=1$.}\label{table: sigma}
\end{table}

\begin{figure}[h!]
    \centering
    \includegraphics[width = \textwidth]{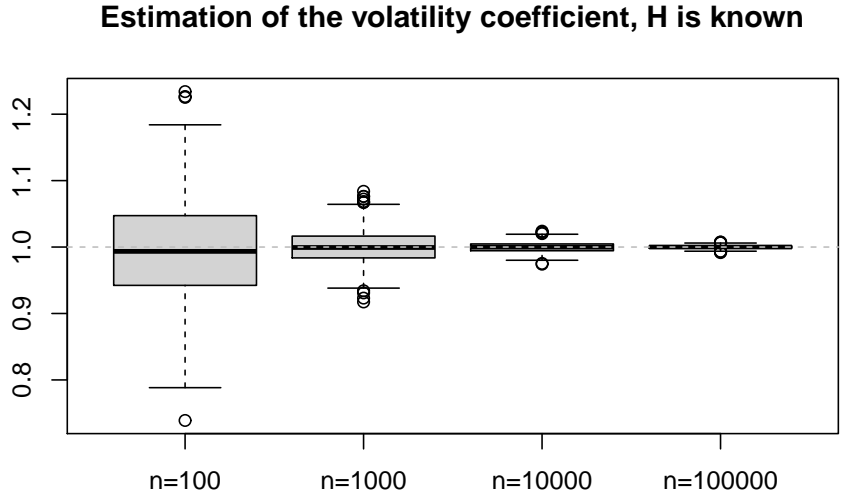}
    \caption{Performance of the estimator $\widehat \sigma$ given by \eqref{eq: sigma estimator} for different sizes of the partition under assumption that $H$ is known. The real value $\sigma=1$ is depicted with a grey dashed line.}\label{fig: sigma}
\end{figure}

Finally, we consider a more realistic situation when neither $H$ nor $\sigma$ are known. In this case, we first estimate $H$ using the estimator $\widehat H$ given by \eqref{eq: estimator of H} and then plug in the result in the estimator $\widehat\sigma$ defined by \eqref{eq: sigma estimator}. The results of this estimation of $\sigma$ are given in Table \ref{table: sigma H} and Figure \ref{fig: sigma H}  

\begin{table}[h!]
\centering
\begin{tabular}{|c|c|c|c|c|}
\hline
         & $n=100$ & $n=1000$ & $n=10000$ & $n=100000$ \\ \hline
Mean     & 1.098972      & 1.021549       & 0.9991251        &  1.001703      \\
Variance & 0.1783677      & 0.03501052       & 0.006181774        & 0.000860674       \\
CV       & 0.3843009      & 0.183164       & 0.0786931        & 0.02928738   \\ \hline
\end{tabular}
\caption{Performance of the estimator $\widehat \sigma$ given by \eqref{eq: sigma estimator} for different sizes of the partition; $H$ is unknown and estimated using \eqref{eq: estimator of H}. The real value is $\sigma=1$.}\label{table: sigma H}
\end{table}

\begin{figure}[h!]
    \centering
    \includegraphics[width = \textwidth]{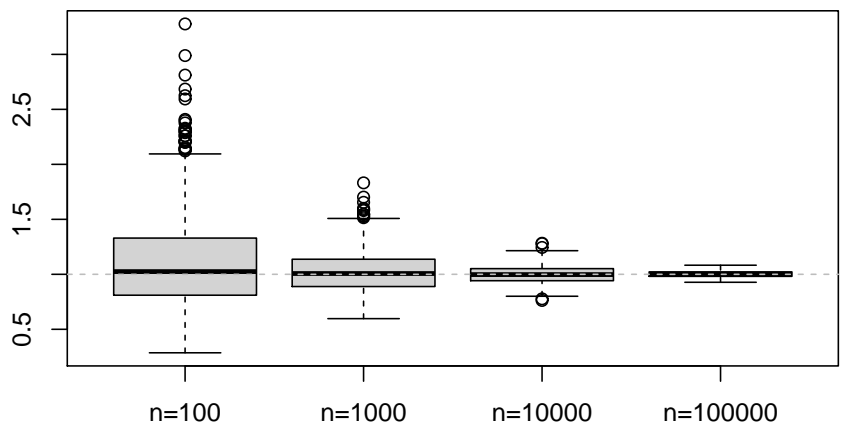}
    \caption{Performance of the estimator $\widehat \sigma$ given by \eqref{eq: sigma estimator} for different sizes of the partition; $H$ is unknown and estimated using \eqref{eq: estimator of H}. The real value $\sigma=1$  is depicted with a grey dashed line.}\label{fig: sigma H}
\end{figure}

Overall, in all cases, simulations show convergence of the estimators to the true values. This also applies to the least favorable situation of estimation of $\sigma$ when $H$ is unknown.

\subsection{Simulation of $\widehat a(T)$}

Next, we test the performance of the estimator $\widehat a(T)$ for different time horizons $T$ using the same numerical techniques as in Subsection \ref{subsec: sim H and sigma}. The real values of the parameters are once again chosen to be $H=0.3$, $\sigma = 1$ and the integral $\int_0^T \frac{1}{X^H(t)}dt$ is approximated by the integral sum 
\[
    \frac{1}{n}\sum_{i=0}^{[Tn]-1} \frac{1}{\max\left\{X^H\left(\frac{i}{n}\right), 0.001\right\}}
\]
with $n=10000$. For each $T$, 1000 simulations  of $\widehat a(T)$ were performed. The results are given in Table \ref{table: a} and Figure \ref{fig: a}

\begin{table}[h!]
\centering
\begin{tabular}{|c|c|c|c|c|}
\hline
         & $T=1$ & $T=10$ & $T=100$ & $T=1000$ \\ \hline
Mean     &  3.872642     &  2.409692      & 2.109389        & 2.029289       \\
Variance &  4.09064     &  0.5914294      &  0.1749705       & 0.06310528       \\
CV       &  0.5222618     &  0.3191463      & 0.1983014        &  0.1237909      \\ \hline
\end{tabular}
\caption{Performance of the estimator $\widehat a(T)$ for different time horizons $T$. The real value is $a=2$.}\label{table: a}
\end{table}

\begin{figure}[h!]
    \centering
    \includegraphics[width = \textwidth]{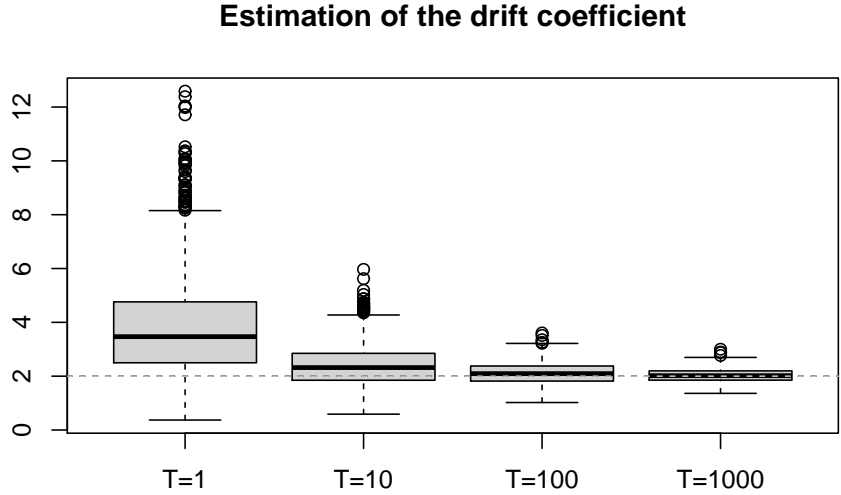}
    \caption{Performance of the estimator $\widehat a(T)$ for different time horizons $T$. The real value $a=2$  is depicted with a grey dashed line.}\label{fig: a}
\end{figure}

Simulations confirm the convergence of $\widehat a(T)$ to $a$. However, they also indicate that relatively high values of $T$ are required to guarantee reasonable variance of the estimator.

\section{Acknowledgements}

The present research is carried out within the frame and support of the ToppForsk project nr. 274410 of the Research Council of Norway with title STORM: Stochastics for Time-Space Risk Models. The first  author is supported by The Swedish Foundation for Strategic Research, grant Nr. UKR22-0017 and by Japan Science and Technology Agency CREST, project reference number JPMJCR2115.

\appendix

\section{Finiteness of rough Bessel processes}\label{app: properties of X}

As mentioned in Remark \ref{rem: properties of RBP}, the goal of this appendix is to prove the following result.

\begin{theorem}\label{th: appendix theorem}
    Let $B^H$ be a fractional Brownian motion with $H<1/2$ and $X^H$ be defined by \eqref{eq: limit process}. Then, with probability 1, for any $t\ge 0$
    \begin{equation}\label{eq: appendix finiteness of X}
        X^H(t) < \infty.
    \end{equation}
\end{theorem}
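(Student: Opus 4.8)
The plan is to establish an upper bound on $X^H_\varepsilon(t)$ that is \emph{uniform in} $\varepsilon$ over each compact time interval. Since $X^H_\varepsilon(t)\uparrow X^H(t)$ as $\varepsilon\downarrow 0$ by the comparison \eqref{eq: comparison}, any such bound transfers directly to the limit and yields \eqref{eq: appendix finiteness of X}. The device that makes this tractable is to subtract off the noise: I would set $Y_\varepsilon(t):=X^H_\varepsilon(t)-\sigma B^H(t)$, so that $Y_\varepsilon(0)=x_0$ and, from \eqref{eq: pre-limit process},
\[
    Y_\varepsilon'(t)=\frac{a}{X^H_\varepsilon(t)\mathbbm 1_{\{X^H_\varepsilon(t)>0\}}+\varepsilon}>0 .
\]
The integrand is continuous in its argument (its right-hand limit at $X^H_\varepsilon=0$ equals $a/\varepsilon$, matching the constant value $a/\varepsilon$ on $\{X^H_\varepsilon\le 0\}$), so $Y_\varepsilon\in C^1$ and, crucially, $Y_\varepsilon$ is \emph{non-decreasing}.

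Fixing a deterministic horizon $T>0$, I would argue pathwise on the full-probability event where $B^H$ is continuous and set $M:=\sigma\max_{s\in[0,T]}|B^H(s)|<\infty$. The key geometric observation is that $X^H_\varepsilon(t)=Y_\varepsilon(t)+\sigma B^H(t)\ge Y_\varepsilon(t)-M$; in particular, as soon as $Y_\varepsilon(t)>M$ we are \emph{guaranteed} $X^H_\varepsilon(t)>0$, the indicator switches on, and
\[
    Y_\varepsilon'(t)=\frac{a}{X^H_\varepsilon(t)+\varepsilon}\le\frac{a}{X^H_\varepsilon(t)}\le\frac{a}{Y_\varepsilon(t)-M}.
\]
Because $Y_\varepsilon$ is non-decreasing, once it exceeds $M$ it stays above $M$, so letting $t_*:=\inf\{t\in[0,T]:~Y_\varepsilon(t)>M\}$ (and disposing of the trivial case where no such time exists, in which $X^H_\varepsilon\le Y_\varepsilon+M\le 2M$ throughout) the displayed differential inequality holds on $[t_*,T]$, with $Y_\varepsilon(t_*)-M\le x_0$ (it equals $0$ when $t_*>0$ and $x_0-M$ when $t_*=0$).

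The final step is to integrate this Riccati-type bound. Writing $u:=Y_\varepsilon-M\ge 0$, the inequality $u'\le a/u$ gives $\tfrac{d}{dt}u^2\le 2a$, hence $u(t)^2\le u(t_*)^2+2a(t-t_*)\le x_0^2+2aT$ on $[t_*,T]$. Combining the two regimes yields the uniform estimate
\[
    \sup_{\varepsilon>0}\,\sup_{t\in[0,T]} X^H_\varepsilon(t)\le 2M+\sqrt{x_0^2+2aT}<\infty,
\]
whence \eqref{eq: appendix finiteness of X} follows for every $t\in[0,T]$ and, as $T$ is arbitrary, for all $t\ge 0$. (Should one wish to accommodate a linear reversion term $-bX^H_\varepsilon$ in the drift, as in the more general model, $Y_\varepsilon$ is no longer monotone, but the same scheme closes via a Gr\"onwall estimate in place of the Riccati one.)

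I expect the main obstacle to be the careful treatment of the sign of $X^H_\varepsilon$ together with the indicator $\mathbbm 1_{\{X^H_\varepsilon>0\}}$ — precisely the regime $X^H_\varepsilon(t)\le 0$ that appears to be the case overlooked in the original argument. The point that neutralises it is that this regime forces $Y_\varepsilon(t)\le M$, so it can never be responsible for $Y_\varepsilon$ (hence $X^H_\varepsilon$) becoming large: all genuine growth occurs while $X^H_\varepsilon>0$, where the self-limiting drift $a/X^H_\varepsilon$ makes the squared excess $(Y_\varepsilon-M)^2$ grow at most linearly in time. The only remaining technical check is the continuity of the integrand at $X^H_\varepsilon=0$, which legitimises differentiating $Y_\varepsilon$ pointwise and running the differential inequality.
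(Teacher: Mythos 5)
Your proof is correct, and it takes a genuinely different route from the paper's. The paper proceeds by a case analysis on the size of $X^H_\varepsilon$: it first bounds the auxiliary process $X^H_1$ (drift at most $a$), then splits $[0,T]$ into stretches where $X^H_\varepsilon \ge x_0/2$ (drift at most $2a/x_0$) and excursions below, controlling negative excursions via the comparison $X^H_1 \le X^H_\varepsilon$ for $\varepsilon\le 1$; the resulting uniform bound is $x_0 + aT\left(\tfrac{2}{x_0}\vee 1\right) + \sigma T^{\lambda}\Lambda$, built on the H\"older modification of $B^H$. You instead subtract the noise, observe that $Y_\varepsilon = X^H_\varepsilon - \sigma B^H$ is $C^1$ and non-decreasing, and close the estimate with the Riccati-type inequality $u' \le a/u$ for $u = Y_\varepsilon - M$ above the level $M = \sigma\max_{[0,T]}|B^H|$. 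This buys several things: no auxiliary $\varepsilon=1$ process, no case analysis, no H\"older continuity (plain continuity of $B^H$ suffices), validity for all $\varepsilon>0$ rather than $\varepsilon\in(0,1]$, and a sharper drift contribution ($\sqrt{x_0^2+2aT}$ versus the paper's linear-in-$T$ term). Your closing remarks also correctly isolate the delicate regime $X^H_\varepsilon \le 0$, where the drift equals $a/\varepsilon$ and is \emph{not} uniformly bounded in $\varepsilon$ — precisely the case the appendix was written to repair — and your mechanism for neutralizing it (that regime forces $Y_\varepsilon \le M$, and $Y_\varepsilon$ is monotone, so unbounded growth can only happen while $X^H_\varepsilon>0$, where the drift is self-limiting) is sound; the only steps worth spelling out in a final write-up are that $Y_\varepsilon(t)>M$ strictly for all $t>t_*$ (monotonicity plus the infimum definition), so the differential inequality holds pointwise there, and that $(u^2)'=2uu'\le 2a$ extends continuously to $t_*$ even when $u(t_*)=0$, so the integration is legitimate.
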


Before moving to the proof of Theorem \ref{th: appendix theorem}, let us first give some auxiliary results.

\begin{proposition}\label{prop: appendix prop X1}
    Let $X^H_1 = \{X^H_1(t),~t\ge 0\}$ be a stochastic process defined by 
    \[
        X^H_1(t) = x_0 +\int_0^t \frac{a}{X^H_1(s) \mathbbm 1_{\{X^H_1(s)>0\}} + 1}ds + \sigma B^H(t).
    \]
    Fix $T>0$, $\lambda \in (0,H)$ and let $\Lambda = \Lambda_{T,\lambda}$ be a random variable such that \eqref{eq: BH Holder continuity} holds. Then, with probability 1,
    \[
        \sup_{t\in[0,T]} |X^H_1(t)| \le \left(x_0 + aT\right) + \sigma T^{\lambda } \Lambda.
    \]
\end{proposition}

\begin{proof}
    Fix $\omega\in\Omega$ such that the corresponding path $t \mapsto B^H(\omega, t)$ is H\"older continuous. In particular, 
    \[
        \max_{t\in[0,T]}|B^H(\omega, t)| \le \Lambda T^\lambda.
    \]
    Taking into account that $\frac{a}{x\mathbbm 1_{\{x>0\}} + 1} \le a$ for any $x\in \mathbb R$,  we can write
    \begin{align*}
        |X^H_1(t)| & \le x_0 + \int_0^t a ds + \sigma T^{\lambda } \Lambda
        \\
        &\le \left(x_0 + aT\right) + \sigma T^{\lambda } \Lambda.
    \end{align*}
\end{proof}

\begin{proposition}\label{prop: finiteness of pre-limit}
    Let $X^H_\varepsilon$ be defined by \eqref{eq: pre-limit process}. Fix $T>0$, $\lambda \in (0,H)$ and let $\Lambda = \Lambda_{T,\lambda}$ be a random variable such that \eqref{eq: BH Holder continuity} holds. Then, for any $\varepsilon \in(0,1]$ and $t\in[0,T]$,
        \begin{equation}\label{eq: finiteness}
            |X^H_{\varepsilon}(t)| < x_0 + aT \left(\frac{2}{x_0} \vee 1\right) + \sigma T^\lambda \Lambda \quad \text{a.s.}
        \end{equation}
\end{proposition}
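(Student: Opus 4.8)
The plan is to decompose $X^H_\varepsilon$ into a non-negative, non-decreasing drift part and the scaled fractional noise, and to bound these two contributions separately. Writing
\[
    X^H_\varepsilon(t) = x_0 + D_\varepsilon(t) + \sigma B^H(t), \qquad D_\varepsilon(t) := \int_0^t \frac{a}{X^H_\varepsilon(s) \mathbbm 1_{\{X^H_\varepsilon(s)>0\}} + \varepsilon}\,ds,
\]
I note that $D_\varepsilon$ is non-negative and non-decreasing, while $B^H(0)=0$ together with \eqref{eq: BH Holder continuity} yields $|\sigma B^H(t)| \le \sigma\Lambda T^\lambda$ on $[0,T]$. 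The lower bound is then immediate, since $D_\varepsilon(t)\ge 0$ gives $X^H_\varepsilon(t) \ge x_0 - \sigma\Lambda T^\lambda$, which is strictly larger than the negative of the claimed bound. Hence the entire difficulty lies in the upper estimate.

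The main obstacle is that the integrand defining $D_\varepsilon$ is of order $a/\varepsilon$ whenever $X^H_\varepsilon(s)$ is close to $0$ or negative, so it cannot be controlled uniformly in $\varepsilon$; a naive bound by $a$ (as in Proposition \ref{prop: appendix prop X1}) is unavailable here. To circumvent this, I would fix the threshold $\theta := \frac{x_0}{2}\wedge 1 < x_0$ and exploit that on the set $\{X^H_\varepsilon(s) > \theta\}$ the denominator exceeds $\theta$, so the integrand is strictly below $a/\theta = a\bigl(\frac{2}{x_0}\vee 1\bigr)$.

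The key step is a \emph{last-crossing} argument. I would fix $t\in[0,T]$ and distinguish cases. If $X^H_\varepsilon(s) > \theta$ for all $s\in[0,t]$, then $D_\varepsilon(t) < \frac{a}{\theta}t \le a\bigl(\frac{2}{x_0}\vee 1\bigr)T$, giving the bound directly. Otherwise, setting $s^* := \sup\{s\in[0,t]:\, X^H_\varepsilon(s)\le\theta\}$, either $X^H_\varepsilon(t)\le\theta$ (and the bound is trivial) or $X^H_\varepsilon(t)>\theta$, in which case continuity of $X^H_\varepsilon$ forces $X^H_\varepsilon(s^*)=\theta$ and $X^H_\varepsilon(s)>\theta$ throughout $(s^*,t]$. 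Then
\[
    X^H_\varepsilon(t) = \theta + \int_{s^*}^t \frac{a}{X^H_\varepsilon(s)+\varepsilon}\,ds + \sigma\bigl(B^H(t)-B^H(s^*)\bigr) < \theta + a\Bigl(\tfrac{2}{x_0}\vee 1\Bigr)T + \sigma\Lambda T^\lambda,
\]
and since $\theta < x_0$ this is dominated by the claimed bound. The elegant point I would stress is that at $s^*$ the accumulated drift $D_\varepsilon(s^*)$ is absorbed into the explicit value $X^H_\varepsilon(s^*)=\theta$, so I never have to estimate $D_\varepsilon$ while the process lingers near zero — only on the final excursion above $\theta$, where the integrand is tame. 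Combining the upper and lower bounds yields \eqref{eq: finiteness}.
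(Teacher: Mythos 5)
Your proof is correct, and while the upper-bound half shares the paper's key idea, your treatment of the negative values is genuinely different and simpler. The paper proceeds via a two-stage stopping-time decomposition: first $\tau_{1,\varepsilon}$ (the last time up to which the path has stayed above $x_0/2$), then a last-crossing time $\tau_{2,\varepsilon}(t)$ of the level $x_0/2$ for $|X^H_\varepsilon|$; crucially, the case where the path sits below $-x_0/2$ is handled by invoking the monotone comparison \eqref{eq: comparison} with the auxiliary process $X^H_1$ and the separate a priori bound of Proposition \ref{prop: appendix prop X1}. You bypass all of that: writing $X^H_\varepsilon = x_0 + D_\varepsilon + \sigma B^H$ and observing that $D_\varepsilon\ge 0$ (the integrand is positive whatever the sign of $X^H_\varepsilon$) gives the one-sided bound $X^H_\varepsilon(t)\ge x_0 - \sigma\Lambda T^\lambda$ for free, so negative excursions never need to be estimated at all, and neither Proposition \ref{prop: appendix prop X1} nor \eqref{eq: comparison} is needed. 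Your upper bound is then a single last-crossing argument at the level $\theta = \frac{x_0}{2}\wedge 1$, which is essentially the paper's Step 2, Case 2 (your $s^*$ plays the role of $\tau_{2,\varepsilon}(t)$), with the nice feature that $1/\theta = \frac{2}{x_0}\vee 1$ reproduces the constant in \eqref{eq: finiteness} directly rather than as the maximum of two case-by-case bounds. The net effect is a shorter, self-contained proof of the same estimate; the paper's route, by contrast, leans on the comparison property, which it has available anyway from \cite[Lemma 2.1]{Mishura_Yurchenko-Tytarenko_2018}. One cosmetic point: in your final display you drop the indicator from the denominator, which is justified on $(s^*,t]$ since $X^H_\varepsilon(s)>\theta>0$ there, but it is worth saying so explicitly.
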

\begin{proof}
    Fix $\varepsilon\in(0,1]$ and take $\omega\in\Omega$ such that the corresponding path $t \mapsto B^H(\omega, t)$ is H\"older continuous. In particular,
    \[
        \max_{t\in[0,T]}|B^H(\omega, t)| \le \Lambda T^\lambda,
    \]
    Denote also
    \[
        \tau_{1,\varepsilon} := \sup\left\{t\in [0,T]~\Big|~\forall s\in[0,t]:~X^H_\varepsilon(s) \ge \frac{x_0}{2}\right\}.
    \]
    Our goal is to prove \eqref{eq: finiteness} separately for $t\in[0,\tau_{1,\varepsilon}]$ and $t\in[\tau_{1,\varepsilon},T]$.

    \textbf{Step 1:} $t\in[0,\tau_{1,\varepsilon}]$. In this case, $X^H_{\varepsilon}(s) \ge \frac{x_0}{2}$ for all $s\in[0,t]$ and hence we can write
    \begin{align*}
        |X^H_{\varepsilon}(t)| & = \left|x_0 +\int_0^t \frac{a}{X^H_\varepsilon(s) \mathbbm 1_{\{X^H_\varepsilon(s)>0\}} + \varepsilon}ds + \sigma B^H(t)\right|
        \\
        &\le  x_0 + \int_0^t \frac{a}{X^H_\varepsilon(s) \mathbbm 1_{\{X^H_\varepsilon(s)>0\}} + \varepsilon}ds + \sigma T^{\lambda} \Lambda
        \\
        & \le  x_0 + a \int_0^t\frac{2}{x_0}ds + \sigma T^{\lambda } \Lambda 
        \\
        &\le  \left(x_0 + \frac{2aT}{x_0}\right) + \sigma T^{\lambda} \Lambda
        \\
        & \le \left(x_0 + aT \left(\frac{2}{x_0} \vee 1\right)\right) + \sigma T^\lambda \Lambda
    \end{align*}
    as required.

    \textbf{Step 2:} $t\in[\tau_{1,\varepsilon},T]$. Consider
    \[
        \tau_{2,\varepsilon}(t) :=  \sup\left\{s\in [\tau_{1,\varepsilon},t]~\Big|~|X^H_\varepsilon(s)| < \frac{x_0}{2}\right\}.
    \]
    If $\tau_{2,\varepsilon}(t) = t$, \eqref{eq: finiteness} holds automatically, so let us assume that $\tau_{2,\varepsilon}(t) < t$. In this case, by continuity, $X^H_{\varepsilon}(\tau_{2,\varepsilon}(t)) = \frac{x_0}{2}$ and, moreover, $|X^H_{\varepsilon}(s)| \ge \frac{x_0}{2}$ for all $s\in[\tau_{2,\varepsilon}(t), t]$ and we have two possibilities.
    \begin{itemize}
        \item[1)]  If $X^H_{\varepsilon}(s) \le -\frac{x_0}{2}$ for all $s\in[\tau_{2,\varepsilon}(t), t]$, then
        \[
            X^H_1(s) \le X^H_{\varepsilon}(s) < 0
        \]
        and hence, by Proposition \ref{prop: appendix prop X1},
        \begin{equation}\label{proofeq: appendix case 2 1}
            |X^H_{\varepsilon}(s)| \le |X^H_1(s)| \le \left(x_0 + aT\right) + \sigma T^{\lambda } \Lambda.
        \end{equation}

        \item[2)] If $X^H_{\varepsilon}(s) \ge \frac{x_0}{2}$ for all $s\in[\tau_{2,\varepsilon}(t), t]$, then, just like in Case 1 above, 
        \begin{equation}\label{proofeq: appendix case 2 2}
        \begin{aligned}
            |X^H_{\varepsilon}(t)| & \le \left|X^H_{\varepsilon}(\tau_{2,\varepsilon}(t))\right| +\left|\int_{\tau_{2,\varepsilon}(t)}^t \frac{a}{X^H_\varepsilon(s) \mathbbm 1_{\{X^H_\varepsilon(s)>0\}} + \varepsilon}ds\right|
            \\
            &\qquad  + \sigma \left|B^H(t) - B^H(\tau_{2,\varepsilon}(t))\right|
            \\
            &\le \frac{x_0}{2} + a \int_{\tau_{2,\varepsilon}(t)}^t \frac{2}{x_0} ds + \sigma T^\lambda \Lambda
            \\
            & \le \left(\frac{x_0}{2} + \frac{2aT}{x_0} \right) + \sigma T^\lambda \Lambda.
        \end{aligned}
        \end{equation}
    \end{itemize}
    Hence, summarizing \eqref{proofeq: appendix case 2 1} and \eqref{proofeq: appendix case 2 2}, for any $t\in[\tau_{1,\varepsilon},T]$
    \[
        X^H(t) \le x_0 + aT \left(\frac{2}{x_0} \vee 1\right) + \sigma T^\lambda \Lambda,
    \]
    which ends the proof.
\end{proof}

\begin{proof}[Proof of Theorem \ref{th: appendix theorem}]
    By definition, for any $t\ge 0$
    \[
        X_{\varepsilon}(t) \uparrow X^H(t) \quad a.s., \quad \varepsilon \downarrow 0,
    \]
    so \eqref{eq: appendix finiteness of X} follows immediately from Proposition \ref{prop: finiteness of pre-limit}.
\end{proof}

\bibliographystyle{acm}
\bibliography{biblio}

\end{document}